\def\thetitle{{ . }}
\newtheorem{THM}{Theorem}[section]
\newtheorem*{THM*}{Theorem}
\newtheorem{LEM}[THM]{Lemma}
\theoremstyle{remark}
\theoremstyle{definition}
\newtheorem*{defn*}{Definition}
\newcommand\R{\mathbb{R}}
\newcommand\Z{\mathbb{Z}}
\newcommand{\dist}{\mathrm{dist}}
\newcommand\Mod{\operatorname{Mod}}
\newcommand\PSL{\operatorname{PSL}}
\begin{document}
\title{On translation lengths of pseudo-Anosov maps on the curve graph}
\author{Hyungryul Baik}
\address{Department of Mathematical Sciences, KAIST,  
	291 Daehak-ro, Yuseong-gu, Daejeon 34141, South Korea }
\email{hrbaik@kaist.ac.kr}

\author{Changsub Kim}
\address{Department of Mathematical Sciences, KAIST,  
	291 Daehak-ro, Yuseong-gu, Daejeon 34141, South Korea }
\email{kcs55505@kaist.ac.kr}
              
\maketitle
\begin{abstract}
    We show that a pseudo-Anosov map constructed as a product of the large power of Dehn twists of two filling curves always has a geodesic axis on the curve graph of the surface.
    We also obtain estimates of the stable translation length of a pseudo-Anosov map, when two filling curves are replaced by multicurves. 
    Three main applications of our theorem are the following: (a) determining which word realizes the minimal translation length on the curve graph within a specific class of words, (b) giving a new class of pseudo-Anosov maps optimizing the ratio of stable translation lengths on the curve graph to that on Teichm{\" u}ller space, (c) giving a partial answer of how much powers will be needed for Dehn twists to generate right-angled Artin subgroup of the mapping class group.
\end{abstract}

\section{Introduction}
    Let $S=S_{g,n}$ be a connected orientable surface with genus $g$ and $n$ punctures.
    The mapping class group of the surface $S$, written as $\Mod{S}$, is the group of orientation-preserving homeomorphisms of $S$ up to isotopy.
    For a surface $S$, letting each isotopy class of simple closed curve as vertex, and connecting edge between two distinct vertices when they can be represented by pair of disjoint curves, we obtain the curve graph $\mathcal{C}(S)$.
    By assigning each edge length $1$, $\mathcal{C}(S)$ is a $\delta$-hyperbolic space \cite{masur1999geometry} and the natural action of $\Mod(S)$ on curve graph is an isometric action. 

    The asymptotic translation length(also known as stable translation length) of a mapping class $f\in \Mod(S)$ on the curve graph is defined as
  \[
   l_{\mathcal{C}}(f)=\lim_{n\rightarrow \infty}\frac{d_{\mathcal{C}}(\alpha,f^n(\alpha))}{n},
  \]
    where $\alpha$ is any vertex of $\mathcal{C}(S)$. 
    The triangle inequality guarantees that the definition of $l_{\mathcal{C}}(f)$ does not depend on the choice of $\alpha$. 
    Masur-Minsky showed that $l_{\mathcal{C}}(f)>0$ if and only if $f$ is a pseudo-Anosov element \cite{masur2000geometry}.
    Furthermore, Bowditch showed that $l_{\mathcal{C}}(f)$ is always a rational number by showing that there exists $n$ depending only on the type of surface such that $f^n$ preserves a geodesic axis \cite{bowditch2008tight}. In other words, the asymptotic translation of a pseudo-Anosov map on the curve graph is always a rational number with a bounded denominator. 

    While there have been many research works on the asymptotic translation length of pseudo-Anosov maps on the curve graph, there are only a few known results about which pseudo-Anosov maps have integer asymptotic translation lengths. In the case of a torus, authors together with Kwak and Shin proved that for any Anosov map on a torus, there is always a geodesic axis on the curve graph of torus \cite{baik2021translation}. Thus in this case the asymptotic translation length agrees with the translation length, and they are always positive integers. In the non-sporadic case, Watanabe in \cite{watanabe2020pseudo} obtained certain sufficient conditions for a pseudo-Anosov map to have an integer asymptotic translation length. 
   
     Here we construct another class of pseudo-Anosov maps that preserve geodesic axes and calculate the value of the translation lengths explicitly from our construction. Our main theorem is the following:

    \begin{THM*}[Theorem \ref{THM:main}]
	Let $\alpha, \beta$ be two filling curves with $d(\alpha,\beta)=l\geq 3$. Pick a geodesic path $\alpha, v_1,\cdots v_{l-1}, \beta$ in $\mathcal{C}(S)$ and let $f=T_a^{a_1} T_b^{b_1}\cdots T_a ^{a_n} T_b^{b_n}$. Then there exists a universal positive constant $M$ such that the following holds: 
	Assume all $|a_i|, |b_i| >2 M$. Let $p$ be a length $(l-2)$ geodesic path $(v_i)$, and $f_i$ be the map obtained by taking the first $i$ syllables in $f$. 
	Then concatenated path $p$, $f_1 p$,$\cdots$,$f_{2n} p=f p$ is a geodesic. 
    Furthermore, $f$ is a pseudo-Anosov map such that its geodesic axis is concatenated path's orbit by $f$, and its stable translation length is given by $l_\mathcal{C}(f)= 2n(l-2)$. 
\end{THM*}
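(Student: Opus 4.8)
The idea is to exhibit the claimed axis explicitly as a bi‑infinite concatenation of isometric copies of $p$, and to rule out shortcuts by pushing large Dehn twists into annular subsurface projections. First I would name the pieces: for $1\le k\le 2n$ write $\gamma_k\in\{\alpha,\beta\}$ and $e_k\in\{a_i,b_i\}$ for the curve and exponent of the $k$‑th syllable of $f$, so $f_k=f_{k-1}T_{\gamma_k}^{e_k}$ with $f_0$ the identity, and set $L:=l-2$ and $B_k:=f_kp$, extended to all $k\in\Z$ by $B_{k+2n}:=fB_k$. Since $v_1$ (resp.\ $v_{l-1}$) is the \emph{only} vertex of $p$ adjacent to $\alpha$ (resp.\ $\beta$), the twist $T_{\gamma_k}$ fixes exactly one endpoint of $B_{k-1}$, and one checks that this endpoint is shared with $B_k$; call it $w_k$, so $w_k=f_{k-1}u_k$ with $u_k=v_1$ for $k$ odd and $u_k=v_{l-1}$ for $k$ even, and $\Gamma:=\cdots B_{-1}B_0B_1\cdots$ is a bi‑infinite path meeting the $w_k$ in a zig‑zag. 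Each $B_k$ is a geodesic of length $L$ with endpoints $w_k,w_{k+1}$, because $f_k$ is an isometry of $\mathcal{C}(S)$ and $p$ is a sub‑path of a geodesic; in particular $d(w_i,w_j)\le(j-i)L$ for $i<j$, and $f$ carries $B_k$ to $B_{k+2n}$, hence $w_k$ to $w_{k+2n}$, acting on $\Gamma$ as translation by $2nL$. So the whole theorem reduces to
\[
d(w_i,w_j)=(j-i)L\qquad\text{for all }i<j,
\]
after which the finite path $p,f_1p,\dots,f_{2n}p$ is the segment of $\Gamma$ from $w_0$ to $w_{2n+1}$ and hence a geodesic, $\Gamma$ is a genuine axis for $f$, $l_{\mathcal{C}}(f)=2nL=2n(l-2)>0$, and $f$ is pseudo‑Anosov by \cite{masur2000geometry}.

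The tools are subsurface projections to the annuli about the twisting curves $\delta_k:=f_{k-1}\gamma_k=f_k\gamma_k$ (the two formulas agree since $T_{\gamma_k}$ fixes $\gamma_k$), together with uniform hyperbolicity of $\mathcal{C}(S)$, Behrstock's inequality, and the uniform Bounded Geodesic Image Theorem; I would take $M$ to be a fixed multiple of the universal constants from those three results. Two computations drive everything: (i) $d(\delta_k,\delta_{k+1})=l$, obtained by applying the isometry $f_{k-1}^{-1}$ and using $T_{\gamma_k}\gamma_k=\gamma_k$ to reduce to $d(\gamma_k,\gamma_{k+1})=d(\alpha,\beta)=l$; and (ii) $d_{A_{\delta_k}}(w_{k-1},w_{k+1})\ge|e_k|-O(1)$, obtained again by applying $f_{k-1}^{-1}$ (which fixes $u_{k-1}=u_{k+1}=:u$) to reduce to the standard twist estimate $d_{A_{\gamma_k}}(u,T_{\gamma_k}^{e_k}u)=|e_k|+O(1)$, valid because $l\ge3$ forces $u$ to intersect $\gamma_k$. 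With $|e_k|>2M$ all the projections in (ii) far exceed the Bounded Geodesic Image constant.

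The key combinatorial input is a rigidity statement: \emph{on $\Gamma$ the only vertex disjoint from $\delta_k$ is $w_k$}; equivalently, every vertex of a block $B_m$ with $m\notin\{k-1,k\}$ intersects $\delta_k$. Inside $B_{k-1}\cup B_k$ this follows from the ``only vertex adjacent to $\alpha$/$\beta$'' remark above. For the remaining blocks one applies $f_m^{-1}$ and argues that the curve obtained from $\gamma_k$ by successively twisting (with large exponents) about the alternating curves $\gamma_{k\mp1},\gamma_{k\mp2},\dots$ still intersects every $v_r$; this comes out by iterating the elementary lower bounds for $i(T_\gamma^N x,y)$, and it is exactly here that one uses that $\alpha,\beta$ fill (hence intersect) and that $l\ge3$. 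I expect this to be the main obstacle: it is routine in principle, but the bookkeeping of the intersection estimates and the tracking of the universal constants are delicate.

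Granting the rigidity lemma, I would prove $d(w_i,w_j)=(j-i)L$ by induction on $j-i$, the base case $j-i=1$ holding because $B_i$ is a geodesic joining $w_i$ to $w_{i+1}$. For the step, take a geodesic $g$ from $w_i$ to $w_j$. For each interior slot $k$ with $i<k<j$: by the inductive hypothesis $\Gamma|_{[w_i,w_{k-1}]}$ and $\Gamma|_{[w_{k+1},w_j]}$ are geodesics all of whose vertices intersect $\delta_k$ (rigidity lemma), so the Bounded Geodesic Image Theorem bounds $d_{A_{\delta_k}}(w_i,w_{k-1})$ and $d_{A_{\delta_k}}(w_{k+1},w_j)$; combined with (ii) this makes $d_{A_{\delta_k}}(w_i,w_j)$ exceed the Bounded Geodesic Image constant, whence $g$ contains a vertex $z_k$ disjoint from $\delta_k$. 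Put $z_i:=w_i$ and $z_j:=w_j$, which are disjoint from $\delta_i$ and $\delta_j$. Behrstock's inequality applied to the consecutive overlapping annuli $A_{\delta_k},A_{\delta_{k+1}}$ (overlapping by (i)) shows the $\delta_k$ form a chain, forcing the $z_k$ to appear along $g$ in the order $z_i,z_{i+1},\dots,z_j$; and $d(z_k,z_{k+1})\ge d(\delta_k,\delta_{k+1})-2=l-2=L$ by (i). Summing the $j-i$ consecutive estimates shows the length of $g$ is $\sum_{k=i}^{j-1}d(z_k,z_{k+1})\ge(j-i)L$, which with the reverse inequality closes the induction, and hence proves the theorem.
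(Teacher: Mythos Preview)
Your overall architecture---set up the pivot curves $\delta_k=f_{k-1}\gamma_k$, show the annular projections $d_{\delta_k}(w_{k-1},w_{k+1})$ are large, and use the Bounded Geodesic Image Theorem to force any geodesic through the $1$--neighbourhoods $N_1(\delta_k)$ in order---is exactly the mechanism behind the paper's proof. The difference is packaging: the paper verifies the two hypotheses $d(\delta_k,\delta_{k+1})=l\ge 3$ and $d_{\delta_k}(\delta_{k-1},\delta_{k+1})=|e_k|+2>2M+2$ (your computations (i) and (ii)) and then quotes Mangahas' lemma (Lemma~\ref{geo}) as a black box, which already delivers ``every geodesic from $X_i$ to $X_{i+k}$ meets every intermediate $X_j$, and the $X_j$ are pairwise disjoint.'' From there the length count is immediate. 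Your induction together with the Behrstock ordering is essentially a re-derivation of that lemma; it is longer but not a different idea.

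There is, however, a real gap in the one place you flag as delicate. Your ``rigidity lemma'' (every vertex of $\Gamma$ other than $w_k$ intersects $\delta_k$) is true, but your proposed proof via iterated intersection estimates does \emph{not} give a universal constant. The inequality $i(T_\gamma^N x,y)\ge |N|\,i(\gamma,x)\,i(\gamma,y)-i(x,y)$ only forces positive intersection once $|N|>i(x,y)/\bigl(i(\gamma,x)\,i(\gamma,y)\bigr)$, and in your situation this ratio involves quantities like $i(\alpha,v_r)/\bigl(i(\alpha,\beta)\,i(\beta,v_r)\bigr)$, which depend on the particular curves $\alpha,\beta$ and on the chosen geodesic $p$. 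So the threshold you obtain for the exponents $|e_k|$ is curve-dependent, contradicting the ``universal $M$'' in the statement. Iterating does not help: the leading terms in numerator and denominator grow at the same rate, and the ratio stabilises at a curve-dependent constant.

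The fix is to abandon intersection numbers here and work in the curve graph. For the adjacent blocks $B_{k\pm1}$ the rigidity is a two-line distance estimate (e.g.\ $d(T_\beta^{e}v_r,\alpha)=d(v_r,T_\beta^{-e}\alpha)\ge d(\alpha,T_\beta^{-e}\alpha)-r\ge 2(l-1)-(l-1)\ge 2$, using only BGI). For distant blocks the clean route is exactly the content of Lemma~\ref{geo}: its conclusion that the $N_1(\delta_k)$ are pairwise disjoint is precisely the rigidity you need, and its hypotheses are your (i) and (ii). Citing it collapses your induction and the Behrstock step into a single reference and keeps $M$ equal to the BGI constant, as in the paper.
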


    Gadre, Hironaka, Kent, and Leininger showed that the minimal value of the ratio between the asymptotic translation length on the curve graph and the translation length on the Teichm\"uller space is asymptotic to $\log |\chi(S)|$ \cite{gadre2013lipschitz}.
    Aougab-Taylor gave a construction of ratio optimizers which is a class of pseudo-Anosov maps that realize asymptotic minimal \cite{aougab2015pseudo}.
    As an application of the first theorem, we give a larger class of ratio optimizers.

    Generalizing the first theorem, we also give bounds on the asymptotic translation lengths of pseudo-Anosov maps when the number of curves used in the construction is more than two.
    We first deal with the case where a map is a product of the powers of Dehn twists, with each neighboring pair of curves in the map filling the surface. We compute how large the exponent, should be for each Dehn twist to give a bound of the asymptotic translation lengths.
    
    Two well-known constructions of pseudo-Anosov maps come from two filling multicurves, namely Thurston's and Penner's.
    In the case of two filling multicuves, we can also give bounds on the asymptotic translation length of a product of Dehn twists, where the exponents in the powers of Dehn twists satisfy a certain condition.
    This allows us to calculate the asymptotic length of pseudo-Anosov maps on a curve graph in Thurston's construction and Penner's construction, as long as we take high enough powers of Dehn twists. 
    Furthermore, we figure out how large the exponent of powers should be in order to give similar bounds when the number of multicurves is more than two.
    This bound can be used to find words that give minimal translation length in a specific class of words.

    It is proven by Koberda that for a finite set of curves $C$, there exists $K$ such that for $n>K$, $\{ T_\gamma ^n \in C \}$ generates right-angled Artin groups \cite{koberda2012right}. 
    In the same paper, it is asked if one can give an explicit estimate of $K$. It is also known that $K$ depends on the choice of curves in $C$. 
    As an application of our bounds on the asymptotic translation lengths on the curve graphs, we give an explicit estimate of $K$ to generate a right-angled Artin group under a specific condition on the configuration of the curves. 

\subsection*{Acknowledgement}
    We thank Dongryul M. Kim, KyeongRo Kim, Sanghoon Kwak, and Donggyun Seo for their helpful comments. Both authors were supported by National Research Foundation of Korea(NRF) grant funded by the Korea government(MSIT) (No. 2020R1C1C1A01006912). 

\section{Background}
\subsection{Subsurface Projection}
Masur and Minsky defined subsurface projection in \cite{masur2000geometry}, which is a map from the vertices of the curve graph of a surface to the power sets of vertices curve graph of a subsurface.
In the case where the subsurface is annular, subsurface projection sends the core of an annulus and curves not intersecting the core to the empty set.

\begin{THM}[Bounded geodesic image theorem, \cite{masur2000geometry}]\label{bdd}
	Assume for a subsurface $Y$ in $S$ and geodesic $g$ in $\mathcal{C}$, $g$'s all vertices have a nontrivial projection on the curve graph of $Y$. Then the whole projection of $g$ on $Y$'s curve graph has a diameter smaller than $M$. 
\end{THM}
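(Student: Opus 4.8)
The plan is to prove the contrapositive: assuming every vertex of the geodesic $g=(v_0,\dots,v_n)$ has nonempty projection to $Y$, I will bound $\pdiam{Y}{\pi_Y(g)}$ by a constant $M=M(S)$. The first and easiest ingredient is a Lipschitz estimate for a single edge. If two curves $u,w$ are disjoint, that is $d_{\mathcal{C}(S)}(u,w)=1$, and both meet $Y$ essentially, then $u\cap Y$ and $w\cap Y$ are disjoint systems of arcs and curves in $Y$; the surgeries producing honest vertices of $\mathcal{C}(Y)$ from these arcs change the isotopy type by a bounded amount, so $d_{\mathcal{C}(Y)}(\pi_Y(u),\pi_Y(w))\le C_0$ for a universal $C_0$. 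Applying this across each of the $n$ edges of $g$ immediately gives $\pdiam{Y}{\pi_Y(g)}\le C_0 n$.

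The whole difficulty is that this linear bound is useless on its own: the theorem asserts a bound independent of the length $n$, and recovering this requires the geodesic hypothesis together with the hyperbolicity of $\mathcal{C}(S)$. The mechanism I would exploit is that a large value of $d_{\mathcal{C}(Y)}(\pi_Y(v_0),\pi_Y(v_n))$ forces the arcs of $v_0$ and $v_n$ inside $Y$ to be highly tangled, and curves differing by a lot of tangling inside $Y$ stay coarsely close to $\partial Y$ in $\mathcal{C}(S)$; the extreme model is a mapping class supported on $Y$, which fixes $\partial Y$ and hence has bounded orbit in $\mathcal{C}(S)$ while translating arbitrarily far in $\mathcal{C}(Y)$. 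Concretely I would argue by contradiction: taking $v_i,v_j$ that realize the projection diameter and passing to the subsegment of $g$ between them, I would use surgery (unicorn) paths between the arc systems $v_i\cap Y$ and $v_j\cap Y$ to produce either an explicit curve giving a shortcut in $\mathcal{C}(S)$, contradicting that $g$ is a geodesic, or a vertex of $g$ that can be isotoped off $Y$, that is, one with empty projection, against our standing assumption. The same scheme is intended to handle the annular case, where $\mathcal{C}(Y)$ records twisting about the core and the surgered objects are arcs crossing the annulus.

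The main obstacle is exactly this final conversion of ``large $Y$-projection'' into ``a forced shortcut or a non-projecting vertex,'' carried out with a constant $M$ depending only on the topological type of $S$ and uniform over all subsurfaces $Y$ and all geodesics $g$. The edgewise Lipschitz estimate controls only local behaviour; capturing the global efficiency of a geodesic, which is the reason a geodesic, unlike an arbitrary path of the same length, cannot let its $Y$-projection drift, is the genuine content and demands input from the coarse geometry of the curve complex. Historically Masur and Minsky supply this through the hierarchy and tight-geodesic machinery; the route I would prefer is to make the surgery-path shortcut explicit, which both avoids the hierarchy and yields the uniform constant $M$ directly.
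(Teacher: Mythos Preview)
The paper does not prove this statement at all: Theorem~\ref{bdd} is quoted as background from Masur--Minsky \cite{masur2000geometry}, and the only additional remark is that Webb \cite{webb2015uniform} later showed the constant $M$ can be taken independent of the surface. There is therefore no ``paper's own proof'' to compare your proposal against.

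As for the proposal itself, what you have written is a reasonable high-level outline but not a proof. You correctly isolate the two ingredients---the coarse Lipschitz property of $\pi_Y$ on edges, and the need to convert large projection distance into either a shortcut or a vertex missing $Y$---and you correctly identify the second as the real content. However, the contradiction step (``use surgery/unicorn paths \ldots\ to produce either an explicit shortcut \ldots\ or a vertex with empty projection'') is asserted rather than carried out, and this is precisely where all the work lies. The original Masur--Minsky argument goes through hierarchies and tight geodesics, not through a direct shortcut construction; the unicorn-path route you gesture at is closer in spirit to Webb's later uniform argument, but Webb's proof requires a careful induction on tight geodesics and an explicit combinatorial bound on how many vertices of a tight geodesic can lie in the $1$-neighbourhood of $\partial Y$. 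Your sketch does not indicate how the geodesic hypothesis (as opposed to an arbitrary path) is actually used to force one of the two alternatives, so as written there is a genuine gap at the crux.
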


It is proven by Webb that $M$ can be chosen as a universal constant that doesn't depend on the type of $S$ or $Y$ \cite{webb2015uniform}.
For a simple closed curve $\alpha$ on a surface, let's say annular subsurface around $\alpha$ as $Y$. We use the notation $d_\alpha=d_Y$ as the distance on the curve graph of subsurface $Y$.

Bounded geodesic image theorem implies that if three curve $\alpha, \beta, \gamma \in \mathcal{C}(S)$ satisfies $d_\alpha(\beta,\gamma)>M$, any geodesic path joining $\beta$ and $\gamma$ must intersect one-neighborhood of $\alpha$ in $\mathcal{C}(S)$.

We will use the bounded geodesic image theorem to restrict the geodesic path between two points. 
To enlarge the distance on the curve graph of a subsurface, we need the following lemma. 

\begin{LEM}[\cite{masur2000geometry} ]\label{dehn}
    For $\alpha \in \mathcal{C}_0(S)$ and $\beta \in \mathcal{C}_0(\alpha)$, following holds.
 $$d_\alpha (\beta, T_\alpha^n (\beta))=|n|+2.$$
\end{LEM}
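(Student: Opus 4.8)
The plan is to prove the identity through the annular subsurface projection, reducing the annular distance to a geometric intersection count that I can evaluate explicitly. Recall that $d_\alpha$ is measured in the annular complex $\A{\alpha}$: one passes to the annular cover $\hat Y \to S$ associated to $\alpha$, compactifies it to a closed annulus with boundary circles $\partial_+,\partial_-$, and lets $\pi_\alpha(\beta)$ be the set of lifts of $\beta$ joining $\partial_+$ to $\partial_-$ (the essential arcs), viewed as vertices of $\A{\alpha}$. The first ingredient I would invoke is the standard Masur--Minsky description of the metric on the annular complex: for two essential arcs $a,b$ with endpoints held fixed on $\partial \hat Y$, one has $d_{\A{\alpha}}(a,b) = 1 + i(a,b)$, where $i$ is the minimal number of interior intersections. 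Since $\pi_\alpha(\beta)$ and $\pi_\alpha(T_\alpha^n\beta)$ each have diameter at most $1$, I would fix a single representative arc and track the configuration carefully so that the diameter ambiguity does not perturb the final exact value.

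The second ingredient is naturality of the projection under the mapping class. The Dehn twist $T_\alpha$ fixes $\alpha$, hence lifts to a homeomorphism $\widehat{T}_\alpha$ of $\hat Y$ preserving each end, and for every curve $\beta$ one has $\pi_\alpha(T_\alpha^n \beta) = \widehat{T}_\alpha^{\,n}\bigl(\pi_\alpha(\beta)\bigr)$. Thus, fixing $a \in \pi_\alpha(\beta)$, the whole problem reduces to computing the single intersection number $i\bigl(a, \widehat{T}_\alpha^{\,n} a\bigr)$, and the claim becomes the assertion that (for $n\neq 0$) this number equals $|n|+1$.

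To evaluate it I would lift to the universal cover of $\hat Y$, a strip $\R \times [0,1]$ with deck translation $(x,y)\mapsto (x+1,y)$, so that $a$ lifts to the vertical segment from $(0,0)$ to $(0,1)$. The twist increases the winding: a lift of $\widehat{T}_\alpha^{\,n} a$ runs from a point over $\partial_-$ to a point over $\partial_+$ with net horizontal displacement $n$, and the intersection number in the annulus equals the number of integers lying strictly between the two horizontal endpoint coordinates of this lift. If $\widehat{T}_\alpha$ fixed $\partial \hat Y$ pointwise these coordinates would be $0$ and $n$, giving only $|n|-1$ crossings — exactly the count for the abstract twist of an abstract annulus. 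The source of the additive $2$ is that $\widehat{T}_\alpha$ does \emph{not} fix $\partial \hat Y$: the neighborhood of $\alpha$ has infinitely many further lifts accumulating on $\partial_+$ and $\partial_-$, so the lifted twist displaces the endpoints of $a$ on each boundary circle. The geodesic representative of $T_\alpha^n\beta$ leaves the core shifted in the twisting direction, pushing both endpoints just past those of $a$; the endpoint coordinates become slightly less than $0$ and slightly more than $n$, so the open interval between them now captures the two extra integers $0$ and $n$. This yields $i\bigl(a,\widehat{T}_\alpha^{\,n}a\bigr) = (|n|-1) + 2 = |n|+1$, hence $d_\alpha(\beta, T_\alpha^n\beta) = 1 + (|n|+1) = |n|+2$.

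The main obstacle is precisely this last point: showing the endpoint displacement is outward and contributes exactly one extra crossing near each end, and that this happens identically for both signs of $n$ (which is what produces $|n|$ rather than a signed $\pm n$, and which explains why the two extra crossings appear only once the twist is nontrivial). I would pin it down by analyzing the action of $\widehat{T}_\alpha$ on the boundary circles $\partial_\pm$ directly, or equivalently by drawing $\beta$ and $T_\alpha^n\beta$ in efficient position in a neighborhood of $\alpha$ and lifting that local picture, where the two extra intersections near $\alpha$ are visible geometrically. A secondary, routine point is that the diameter-$\le 1$ ambiguity in $\pi_\alpha(\beta)$ and $\pi_\alpha(T_\alpha^n\beta)$ does not move the exact value, which follows because $\widehat{T}_\alpha^{\,n}$ carries the arcs of $\pi_\alpha(\beta)$ simultaneously to those of $\pi_\alpha(T_\alpha^n\beta)$, preserving their mutual configuration.
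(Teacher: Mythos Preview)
The paper does not prove this lemma at all: it is quoted from Masur--Minsky and used as a black box, so there is no in-paper argument for you to compare against.

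Your sketch follows the standard route to this identity and is sound in outline: pass to the compactified annular cover, invoke the Masur--Minsky formula $d_{\A{\alpha}}(a,b)=1+i(a,b)$ for arcs with endpoints fixed, use naturality to replace $\pi_\alpha(T_\alpha^n\beta)$ by $\widehat{T}_\alpha^{\,n}\bigl(\pi_\alpha(\beta)\bigr)$, and reduce everything to the single intersection number $i\bigl(a,\widehat{T}_\alpha^{\,n}a\bigr)$. You also correctly isolate where the content lies: the abstract core twist in a bare annulus yields only $|n|-1$ interior crossings, hence distance $|n|$, and the extra $+2$ must come from the fact that $\widehat{T}_\alpha$ acts nontrivially on the boundary circles $\partial_\pm\hat Y$ because the non-core lifts of the twist annulus accumulate there. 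Your heuristic for why each endpoint is pushed ``outward'' past exactly one further translate of $a$ is the right picture; as you yourself flag, turning it into an honest computation (e.g.\ by reading off the induced circle action from $\partial\mathbb{H}^2$, or by writing down an explicit local model for $\beta$ and $T_\alpha^n\beta$ near an intersection with $\alpha$ and lifting it) is the step that actually requires care, and your write-up stops short of doing it. The equivariance argument disposing of the diameter-$\le 1$ ambiguity in $\pi_\alpha$ is fine. One small omission: the formula as stated fails at $n=0$ (it would give $d_\alpha(\beta,\beta)=2$), so your endpoint-displacement story should also explain why the two extra crossings appear only once $n\neq 0$; this is implicit in your strip picture but worth saying.
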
	

\subsection{Pseudo-Anosov Construction}
We call a set of curves as a multicurve if it is made of non-intersecting essential simple closed curves.
For two multicurves $A=\{\alpha_1,\cdots,\alpha_m\}$ and $B=\{\beta_1,\cdots, \beta_n\}$ on surface $S$, we say $A$ and $B$ fill $S$ if complement $S/(A\cup B)$ contains only disks and once punctured disks components.
Two curves $\alpha$ and $\beta$ satisfy $d_\mathcal{C}(\alpha, \beta)\geq 3 $ if and only if when $\alpha$ and $\beta$ are filling curves.
This can be shown by that if $S/(\alpha \cup \beta)$ contains a component that has higher complexity, a curve can be drawn on it avoiding $\alpha$ and $\beta$, which makes $d_\mathcal{C}(\alpha, \beta)\leq 2 $.

There are two well know pseudo-Anosov constructions using Dehn twists of two filling multicurves, which are by Thurston and Penner \cite{thurston1988geometry}, \cite{penner1988construction}.
Further details and explanations of those constructions can be also found in Chapter 14.1 in \cite{farb2012primer}.
For a multicurve $A=\{\alpha_1,\cdots,\alpha_m\}$, let's denote $T_A=T_{\alpha_1} \cdots T_{\alpha_m}$.

\begin{THM}[\cite{thurston1988geometry}]\label{thurston}
    Assume $A=\{\alpha_1,\cdots,\alpha_m\}$ and $B=\{\beta_1,\cdots, \beta_n\}$ be two multicurves that fill a surface $S$. 
	Let $N$ be the matrix by $N_{i,j}=i(\alpha_i,\beta_j)$ and $\mu$ be the largest real eigenvalue of $NN^T$. Then there exists a representation $\rho :\langle T_A,T_B\rangle  \to  \PSL{(2,\R)}$ given by
	\[
	T_A \mapsto 
	\begin{bmatrix} 
		1 & \sqrt{\mu} 
		\\ 0 & 1	
	\end{bmatrix} \quad \textrm{and} \quad T_B \mapsto 
	\begin{bmatrix} 	
		1 & 0 \\
		-\sqrt{\mu} & 1 	
	\end{bmatrix}.
	\]
 Where $f\in \langle T_A, T_B\rangle$ is a pseudo-Anosov map if and only if $\rho(f)$ is hyperbolic, and if $f$ is a pseudo-Anosov map, the stretch factor of $f$ is equal to $\rho(f)$'s largest real eigenvalue.
\end{THM}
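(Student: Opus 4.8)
The plan is to realize the subgroup $\langle T_A, T_B\rangle$ as a group of affine diffeomorphisms of a singular flat (Euclidean cone) structure on $S$ and to read off $\rho$ as the map sending an affine diffeomorphism to its linear part. First I would build the flat structure from the filling configuration: put $A$ and $B$ in minimal position, and at each of the $i(\alpha_i,\beta_j)$ intersection points glue in a Euclidean rectangle, gluing rectangles along edges according to the combinatorics of the crossings. Because $A\cup B$ fills $S$, the complementary regions are disks (or once-punctured disks), so the rectangles close up to give $S$ a Euclidean cone metric in which every curve of $A$ is a horizontal geodesic and every curve of $B$ is a vertical geodesic; this simultaneously produces a pair of transverse measured foliations $\mathcal F_h$ (leaves $=A$) and $\mathcal F_v$ (leaves $=B$).

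The key linear-algebra input is to choose the rectangle dimensions so that the individual Dehn twists fit together. Since $NN^T$ is symmetric, entrywise nonnegative, and (by the filling hypothesis) irreducible, Perron--Frobenius gives a positive eigenvector $\vec x$ for the largest eigenvalue $\mu>0$. Setting $\vec y=\mu^{-1/2}N^T\vec x$, a one-line computation gives $N\vec y=\sqrt\mu\,\vec x$ and $N^T\vec x=\sqrt\mu\,\vec y$. I would then scale the flat structure so that the annular neighborhood of $\alpha_i$ has width $x_i$ and that of $\beta_j$ has width $y_j$; the length of $\alpha_i$ is then $\sum_j N_{ij}y_j=\sqrt\mu\,x_i$, and similarly $\ell(\beta_j)=\sqrt\mu\,y_j$. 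The upshot is that every annulus has the same modulus $\sqrt\mu$, which is exactly the consistency condition needed in the next step.

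Next I would show that $T_A$ and $T_B$ are affine. A single Dehn twist about a horizontal curve is a shear supported in its annular neighborhood, and the uniform-modulus conclusion of the previous step lets the shears on all the $A$-annuli be interpolated into one globally defined affine map whose linear part is $\begin{bmatrix}1&\sqrt\mu\\0&1\end{bmatrix}$; the vertical twist $T_B$ similarly has linear part $\begin{bmatrix}1&0\\-\sqrt\mu&1\end{bmatrix}$. Since the derivative of a composition of affine maps is the product of the derivatives, $f\mapsto\rho(f)=D(f)$ is a well-defined homomorphism into $\PSL(2,\R)$ — well-defined only up to the sign ambiguity inherent in the flat structure, which is why the target is $\PSL$ rather than $\SL$. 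I expect this interpolation — checking that the local shears agree along shared edges and across the cone points so as to assemble into a single affine diffeomorphism — to be the main technical obstacle; it is precisely here that the eigenvector scaling of the previous step is used.

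Finally I would invoke Thurston's dictionary between affine diffeomorphisms of a flat surface and the Nielsen--Thurston classification: an affine map is pseudo-Anosov, reducible/periodic, or of finite order according as its linear part is hyperbolic, parabolic or identity, or elliptic. Applying this to $\rho(f)$ yields the ``pseudo-Anosov if and only if $\rho(f)$ is hyperbolic'' statement directly, with $\mathcal F_h,\mathcal F_v$ serving as the candidate invariant foliations. When $\rho(f)$ is hyperbolic with top eigenvalue $\lambda$, the affine map stretches $\mathcal F_v$ by $\lambda$ and contracts $\mathcal F_h$ by $\lambda^{-1}$, so $\lambda$ is by definition the dilatation of $f$; this identifies the stretch factor with the largest real eigenvalue of $\rho(f)$ and completes the proof.
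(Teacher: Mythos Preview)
The paper does not prove this theorem: it is stated as background, attributed to Thurston, and the reader is referred to Chapter~14.1 of Farb--Margalit for details. Your outline is exactly the standard proof recorded there (build a singular flat structure from the filling pair, use Perron--Frobenius on $NN^T$ to equalize the annular moduli to $\sqrt\mu$, realize $T_A,T_B$ as affine shears, and read off the derivative homomorphism), so there is nothing to compare against in the paper itself.

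One genuine slip in your last paragraph: the invariant measured foliations of a hyperbolic word $f\in\langle T_A,T_B\rangle$ are \emph{not} $\mathcal F_h$ and $\mathcal F_v$. Those are the horizontal and vertical foliations of the flat structure and are fixed (as foliations) only by the parabolics $T_A$ and $T_B$ individually. For a general $f$ with $\rho(f)$ hyperbolic, the unstable and stable foliations are the straight-line foliations in the directions of the two eigenvectors of $\rho(f)$, which typically lie at some irrational slope. The conclusion about the stretch factor is unaffected --- $\rho(f)$ still stretches one eigendirection by its top eigenvalue $\lambda$ and contracts the other by $\lambda^{-1}$ --- but you should correct the sentence identifying $\mathcal F_h,\mathcal F_v$ as ``the candidate invariant foliations.''

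A second small point: the claim that $NN^T$ is irreducible ``by the filling hypothesis'' deserves one more line. Irreducibility here amounts to connectedness of the bipartite intersection graph on $A\sqcup B$, which follows because $A\cup B$ fills a \emph{connected} surface; you should say this rather than assert it.
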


\begin{THM}[\cite{penner1988construction}]
    Assume $A=\{\alpha_1,\cdots,\alpha_m\}$ and $B=\{\beta_1,\cdots, \beta_n\}$ be two multicurves that fill a surface $S$. If $f$ is a word made from the product of positive Dehn twists about $\alpha_i $ and negative Dehn twists about $\beta_j$, where all $\alpha_i, \beta_j$ are used at least once, then $f$ is a pseudo-Anosov map.
\end{THM}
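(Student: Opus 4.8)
The plan is to produce a pair of transverse invariant measured foliations for $f$ and then invoke Thurston's criterion for pseudo-Anosov maps: a mapping class admitting measured foliations $\F^u,\F^s$ transverse to one another, together with a number $\lambda>1$ satisfying $f(\F^u)=\lambda\F^u$ and $f(\F^s)=\lambda^{-1}\F^s$, is pseudo-Anosov. The whole argument is organized around a single train track adapted to the configuration $A\cup B$. First I would build this track: since $A$ and $B$ fill $S$, every component of $S\setminus(A\cup B)$ is a disk or once-punctured disk, so smoothing the union $A\cup B$ at each intersection point---resolving each crossing of an $\alpha_i$ with a $\beta_j$ in the direction dictated by the sign convention (positive twists on the $A$-side, negative twists on the $B$-side)---yields a filling, recurrent train track $\tau$ whose complementary regions are all disks or once-punctured disks.

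The geometric heart of the argument is the carrying relation. I would show that each generator appearing in $f$ sends $\tau$ to a track carried by $\tau$; concretely $T_{\alpha_i}(\tau)\prec\tau$ and $T_{\beta_j}^{-1}(\tau)\prec\tau$. The point is that the smoothing was chosen so that a positive twist about $\alpha_i$ drags the branches of $\tau$ in exactly the direction in which $\tau$ already turns, so the twisted track folds back onto $\tau$; the negative twist about $\beta_j$ does the same on the dual side. Because carrying is transitive, any word $f$ built from positive twists $T_{\alpha_i}$ and negative twists $T_{\beta_j}^{-1}$, with each $\alpha_i$ and $\beta_j$ used at least once, satisfies $f(\tau)\prec\tau$.

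Next I would pass to weights. The cone $W(\tau)=\R_{\ge 0}^{E}$ of nonnegative transverse measures on the branches $E$ of $\tau$ is precisely the space of measured foliations carried by $\tau$, and each twist acts on $W(\tau)$ by a nonnegative integer transition matrix of the form ``identity plus a nonnegative matrix recording the branches swept across.'' Hence $f$ acts by a nonnegative matrix $M$. Using that $A\cup B$ fills---so that after each curve has been twisted every branch is eventually linked to every other---I would check that some power of $M$ is strictly positive, i.e.\ $M$ is primitive. Perron--Frobenius then supplies a leading eigenvalue $\lambda$ with a strictly positive eigenvector $w^u$, giving a measured foliation $\F^u$ carried by $\tau$ with $f(\F^u)=\lambda\F^u$. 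Running the identical construction for $f^{-1}$, a product of the oppositely signed twists carried by the complementary track, produces the transverse foliation $\F^s$, which $f$ scales by the reciprocal eigenvalue.

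The main obstacle, and the step deserving the most care, is pinning down the carrying relation together with the strict inequality $\lambda>1$. For the carrying relation one must verify that a single global smoothing convention is simultaneously compatible with all of the $T_{\alpha_i}$ and all of the $T_{\beta_j}^{-1}$; this is exactly where the sign hypothesis (positive on $A$, negative on $B$) is essential, and where filling is used to guarantee that $\tau$ is recurrent and transversely recurrent. For $\lambda>1$ one observes that $M$ is not a permutation matrix: since each $\alpha_i$ genuinely meets some $\beta_j$, i.e.\ $i(\alpha_i,\beta_j)>0$, and is actually twisted, at least one weight is strictly increased, so the Perron eigenvalue cannot equal $1$. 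Finally, transversality of $\F^u$ and $\F^s$ and the absence of simple-closed-curve leaves both follow from the fact that $\tau$ and its dual track fill $S$, which completes the verification of Thurston's criterion.
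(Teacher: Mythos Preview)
The paper does not prove this theorem at all: it is quoted as background from Penner's original article \cite{penner1988construction} and no argument is given. So there is no ``paper's own proof'' to compare against.

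That said, your outline is essentially Penner's original argument, and as a sketch it is sound. Smoothing $A\cup B$ according to the sign convention produces a (bigon) track $\tau$ invariant under each $T_{\alpha_i}$ and $T_{\beta_j}^{-1}$ in the sense of carrying; the induced action on the weight cone is a nonnegative integer matrix; the hypothesis that every curve appears is what forces primitivity, and Perron--Frobenius then yields the unstable lamination with $\lambda>1$. Two places deserve slightly more care than you indicate. First, the track one gets from the naive smoothing is in general only a \emph{bigon} track (complementary bigons can occur between parallel strands of $A$ and $B$), so either one works in that category throughout or one collapses bigons at the end; this affects neither the Perron--Frobenius step nor the conclusion, but it should be said. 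Second, for the stable foliation it is cleaner to stay on the same track and use the tangential (dual) measure rather than to invoke a separate ``complementary track'' for $f^{-1}$; transversality of $\F^u$ and $\F^s$ then comes for free from the transverse/tangential pairing on $\tau$. With those two clarifications your plan goes through and reproduces Penner's proof.
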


Mangahas suggested a new recipe for pseudo-Anosov maps using pure reducible mapping classes \cite{mangahas2013recipe}.
To prove the map is pseudo-Anosov, Mangahas used the following Lemma to show map's stable translation length is positive.
We will also use a slightly altered version of Lemma which is restricted to the case of annular subsurfaces to construct classes of pseudo-Anosov maps and to calculate stable translation length.
Let $M$ be the universal constant from the bounded geodesic image theorem.

\begin{LEM}[\cite{mangahas2013recipe}]\label{geo} 
    For set of curves $\{\alpha_1,\cdots,\alpha_n\}$, let $X_i=N_1(\alpha_i)$, $Y_i$ be annular subsurface around $\alpha_i$. 
    Assume $d_{\mathcal{C}}(\alpha_i,\alpha_{i+1}) \geq 3$ and $d_{\alpha_i}(\alpha_{i-1},\alpha_{i+1})>2M+2$ for all $i$. 
    Then for any $w_i \in X_i$ and $w_{i+k} \in X_{i+k}$, the geodesic $[w_i,w_{i+k}]\cap X_j \neq \emptyset$ for $i \leq j \leq i+k$. Also, the $X_j$ are pairwise disjoint.  
    Furthermore there exist a geodesic $[w_i,w_{i+k}]$ where elements from $X_i, X_{i+1} \cdots, X_{i+k}$ appear in same order in geodesic. 
\end{LEM}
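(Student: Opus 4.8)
The plan is to reduce all three conclusions to a single inequality: for every interior index $j$ with $i<j<i+k$, the annular projection distance satisfies $d_{\alpha_j}(w_i,w_{i+k})>M$. Granting this, the contrapositive form of the Bounded Geodesic Image Theorem recorded after Theorem \ref{bdd} forces any geodesic $[w_i,w_{i+k}]$ to contain a vertex of the one-neighborhood $X_j=N_1(\alpha_j)$, which is the first assertion (the endpoint cases $j=i,i+k$ are automatic, since $w_i\in X_i$ and $w_{i+k}\in X_{i+k}$). The same inequality, applied to pairs of the $\alpha$'s, yields the disjointness statement, and a refinement governing the order in which $[w_i,w_{i+k}]$ enters the various $X_j$ yields the ordering statement.

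The technical heart is a \emph{uniform one-sided projection bound}: there is a universal constant $B$ (Behrstock's constant, the one external input beyond Theorem \ref{bdd}) with $d_{\alpha_j}(\alpha_{j+1},\alpha_m)\le B$ for all $m>j$, and symmetrically $d_{\alpha_j}(\alpha_{j-1},\alpha_m)\le B$ for all $m<j$. The naive route of summing Behrstock's inequality along consecutive indices only gives a bound growing linearly in $\abs{m-j}$, so the point is to run the estimate as a \emph{downward} induction on $j$. Since consecutive curves fill, the annuli around $\alpha_j$ and $\alpha_{j+1}$ overlap, and Behrstock's inequality applied to them and to $\alpha_m$ gives
\[
\min\{\, d_{\alpha_j}(\alpha_{j+1},\alpha_m),\ d_{\alpha_{j+1}}(\alpha_j,\alpha_m)\,\}\le B .
\]
The second term is large: the hypothesis $d_{\alpha_{j+1}}(\alpha_j,\alpha_{j+2})>2M+2$ together with the inductive one-sided bound at level $j+1$ (which gives $d_{\alpha_{j+1}}(\alpha_{j+2},\alpha_m)\le B$) yields $d_{\alpha_{j+1}}(\alpha_j,\alpha_m)>2M+2-B>B$ whenever $M\ge B$. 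Hence the minimum is realized by the first term, i.e.\ $d_{\alpha_j}(\alpha_{j+1},\alpha_m)\le B$, closing the induction; the base case $j=i+k-1$ is trivial. It is exactly this reuse of the bound at level $j+1$ inside the estimate at level $j$ that removes the spurious linear growth.

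With the one-sided bound in hand the key inequality is immediate. Because $w_i\in N_1(\alpha_i)$ and $w_{i+k}\in N_1(\alpha_{i+k})$, the Lipschitz property of annular projection (disjoint curves project within a universal constant $\kappa$) gives $d_{\alpha_j}(w_i,\alpha_i)\le\kappa$ and $d_{\alpha_j}(w_{i+k},\alpha_{i+k})\le\kappa$; combined with $d_{\alpha_j}(\alpha_i,\alpha_{j-1})\le B$, $d_{\alpha_j}(\alpha_{i+k},\alpha_{j+1})\le B$ and the hypothesis $d_{\alpha_j}(\alpha_{j-1},\alpha_{j+1})>2M+2$, the triangle inequality in $\mathcal C(Y_j)$ gives
\[
d_{\alpha_j}(w_i,w_{i+k})\ \ge\ (2M+2)-2B-2\kappa\ >\ M ,
\]
provided the universal constant $M$ is taken large enough to exceed $B$ and $\kappa$ (which we may assume); this proves the first assertion. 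The same computation with the $\alpha$'s shows $d_{\alpha_{p+1}}(\alpha_p,\alpha_q)>M$ for any $p<q$ with $q\ge p+2$; an elementary Lipschitz estimate then rules out $d_{\mathcal C}(\alpha_p,\alpha_q)\le2$, so $d_{\mathcal C}(\alpha_p,\alpha_q)\ge 3$ for all $p\neq q$, which is the pairwise disjointness of the $X_j$.

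For the ordering, fix a geodesic $g=[w_i,w_{i+k}]$ and an interior index $j$. Applying Theorem \ref{bdd} to the two sub-geodesics of $g$ on either side of its vertices in $X_j$ shows that vertices of $g$ occurring before $X_j$ project into $Y_j$ near $\alpha_{j-1}$, while those occurring after project near $\alpha_{j+1}$; the one-sided bounds identify, for $j'<j<j''$, the set $g\cap X_{j'}$ with the $\alpha_{j-1}$-side and $g\cap X_{j''}$ with the $\alpha_{j+1}$-side. Comparing these sides at every index forces the pairwise disjoint sets $g\cap X_i,\dots,g\cap X_{i+k}$ to occur along $g$ in order of their indices, so $g$ itself already realizes the required order. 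I expect Step~2 (the one-sided bound) to be the main obstacle, closely tied to the bookkeeping of \emph{definedness}: every projection above presumes $\alpha_m$ crosses $\alpha_j$, which must be threaded through the same induction by showing simultaneously that any two curves of the chain intersect. In a fully rigorous write-up I would therefore bundle the one-sided bound, the key estimate, and disjointness into a single induction on the chain length, and I would fix the universal constants so that the one surviving inequality $2M+2-2B-2\kappa>M$ holds.
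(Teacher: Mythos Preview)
The paper does not supply its own proof of this lemma; it is quoted from \cite{mangahas2013recipe} and used as a black box, so there is no in-paper argument to compare your proposal against.

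That said, your outline is the standard one and is essentially correct. The downward induction on $j$, using Behrstock's inequality together with the hypothesis $d_{\alpha_{j+1}}(\alpha_j,\alpha_{j+2})>2M+2$ to force the minimum onto the first term and obtain the uniform one-sided bound $d_{\alpha_j}(\alpha_{j+1},\alpha_m)\le B$, is exactly the mechanism in Mangahas' proof, and you correctly flag that definedness of the projections and the pairwise-filling of the $\alpha$'s must be carried through the same induction. The one place your sketch overshoots slightly is the ordering: your sidedness argument at index $j$ only constrains vertices of $g$ lying strictly before the first, or strictly after the last, vertex of $g\cap X_j$; it does not by itself force \emph{all} of $g\cap X_{j'}$ to precede \emph{all} of $g\cap X_{j''}$. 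But the lemma only asks for one ordered selection, and that you do get: taking $u_j$ to be the first vertex of $g$ in $X_j$, your estimate at index $j_1$ shows every vertex of $g\cap X_{j_2}$ (for $j_2>j_1$) projects to the $\alpha_{j_1+1}$-side of $Y_{j_1}$ and hence cannot lie on the initial segment of $g$ before $u_{j_1}$, so $u_{j_1}$ precedes $u_{j_2}$. Alternatively, running the whole lemma as an induction on $k$ (once the key inequality produces a vertex in $X_{i+1}$, apply the inductive hypothesis to the tail sub-geodesic) gives the ordering without this bookkeeping.
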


\section{Translation length on curve graph}

Using Mangahas' lemma, Aougab-Taylor gave a lower bound for the asymptotic translation length of pseudo anosov map that is made from large enough power of two filling curves' Dehn twists \cite{aougab2015pseudo}. Here we extend that idea to give the exact asymptotic translation length by showing that it preserves a geodesic.

Let's say Dehn twists around curve $\alpha$ and $\beta$ as $T_a$ and $T_b$, and let $M$ as the universal constant from the bounded geodesic image theorem. 

\begin{THM}\label{THM:main}
	Let $\alpha, \beta$ be two filling curves with $d(\alpha,\beta)=l \geq 3$. 
    Pick a geodesic path $\alpha, v_1,\cdots v_{l-1}, \beta $ and let $f=T_a^{a_1} T_b^{b_1}\cdots T_a ^{a_n} T_b^{b_n}$. 
	Assume all $|a_i|, |b_i| >2 M$. Let be a $p$ length $(l-2)$ geodesic path $(v_i)$, and $f_i$ be the map obtained by taking the first $i$-th syllable in $f$. 
	Then concatenated path $p$, $f_1 p$,$\cdots$,$f_{2n} p=f p$ is geodesic. Furthermore, $f$ is a pseudo-Anosov map such that the geodesic axis is concatenated path's orbit by $f$, and stable translation length is given by $l_\mathcal{C}(f)= 2n(l-2)$. 
	
\end{THM}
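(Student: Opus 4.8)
The plan is to build the geodesic axis explicitly from the concatenated path and then read off everything else. First I would set up notation: write $p = (v_1, \dots, v_{l-1})$ for the interior of the chosen geodesic $\alpha, v_1, \dots, v_{l-1}, \beta$, so $p$ has length $l-2$ as a path of $l-1$ vertices, with $v_1$ adjacent to $\alpha$ and $v_{l-1}$ adjacent to $\beta$. The syllables of $f$ are $T_a^{a_1}, T_b^{b_1}, \dots$; note that $f_1 = T_a^{a_1}$ fixes $\alpha$ (since $\alpha$ is the twisting curve), and more generally the key observation is that consecutive partial maps $f_i$ and $f_{i+1}$ differ by a power of a Dehn twist around $\alpha$ or $\beta$, so the endpoints $f_i(v_{l-1})\cup f_{i+1}(v_1)$ are controlled. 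I would first check that the concatenation $p, f_1 p, \dots, f_{2n}p$ is an honest path in $\mathcal C(S)$: the last vertex of $f_i p$ is $f_i(v_{l-1})$, the first vertex of $f_{i+1}p$ is $f_{i+1}(v_1)$, and since $f_{i+1} = f_i T_{c}^{\,c}$ with $c \in \{a,b\}$ the twisting curve and $T_c$ fixing $c$, these two vertices are joined through $f_i(c)$ or $f_{i+1}(c)$ — one needs $d(v_{l-1}, \beta) = 1$ and $d(\alpha, v_1) = 1$ to make the seam an edge (or short path); this is where the $l-2$ bookkeeping matters.

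The core of the argument is to show this path of total length $2n(l-2)$ (plus the seam contributions, which I expect to be exactly absorbed so that the total equals $2n(l-2)$ after accounting, matching the claimed $l_{\mathcal C}(f) = 2n(l-2)$) is a \emph{geodesic}. For this I would invoke Lemma \ref{geo} with the curve sequence obtained by interleaving the twisting curves with the path vertices: roughly, the sequence $\dots, \alpha, v_1, \dots, v_{l-1}, \beta, f_1(v_1), \dots$ — or more precisely the sequence of twisting curves $\alpha, \beta, f_2(\alpha), f_3(\beta), \dots$ together with the fixed geodesic segments between them. The hypotheses of Lemma \ref{geo} require $d_{\alpha}(\,\cdot\,,\,\cdot\,) > 2M+2$ at each twisting curve; here Lemma \ref{dehn} gives $d_c(x, T_c^{\,k}x) = |k| + 2$, so the assumption $|a_i|, |b_i| > 2M$ yields exactly $|a_i| + 2 > 2M + 2$, which is why the bound is stated that way. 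Lemma \ref{geo} then tells us the $X_j = N_1(\alpha_j)$ are pairwise disjoint and that a geodesic between the far endpoints meets each $X_j$ in order; combined with the Bounded Geodesic Image Theorem (Theorem \ref{bdd}) applied at each twisting annulus — any geodesic from the initial to the terminal vertex of the whole concatenation must pass within distance $1$ of each twisting curve, in the prescribed cyclic order — I would conclude that \emph{any} geodesic between endpoints of $p$ and $f_{2n}p$ must have length at least that of our path, forcing our path to be geodesic. The same argument applied to all $\bZ$-translates $f^k p$ shows the bi-infinite concatenation $\bigcup_{k \in \bZ} f^k(\text{path})$ is a bi-infinite geodesic, hence a geodesic axis for $f$, and $f$ acts on it by translation by $2n(l-2)$; since $f$ has a geodesic axis on which it acts by a positive translation, $f$ is pseudo-Anosov by Masur--Minsky, and $l_{\mathcal C}(f) = 2n(l-2)$.

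The step I expect to be the main obstacle is \textbf{verifying the hypotheses of Lemma \ref{geo} for the interleaved sequence}, specifically the two inequalities $d_{\mathcal C}(\alpha_j, \alpha_{j+1}) \geq 3$ and $d_{\alpha_j}(\alpha_{j-1}, \alpha_{j+1}) > 2M + 2$ \emph{at every index}, including the "seam" indices where one twisting block ends and the next begins. At an interior twisting curve $c$ the annular distance $d_c$ between the incoming and outgoing segments is controlled by Lemma \ref{dehn} and the large exponent, but one must check that the path vertices $v_1, \dots, v_{l-1}$ themselves, and their $f_i$-images, do not create short annular distances that violate the hypotheses — i.e. that $v_1, \dots, v_{l-1}$ together with $\alpha,\beta$ already has the required separation in the relevant subsurface projections, which should follow from $\alpha,\beta$ filling (so $d(\alpha,\beta) = l \geq 3$) and from the geodesic being a geodesic, but requires care. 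A secondary subtlety is the precise constant $M$ in the statement: it is the Bounded Geodesic Image constant of Theorem \ref{bdd} (universal by Webb), and one should confirm that "all $|a_i|, |b_i| > 2M$" genuinely suffices to push $d_c > 2M + 2$ through Lemma \ref{dehn} with no loss — which it does, since $|a_i| + 2 > 2M + 2$. Once the hypotheses are in place, everything else is bookkeeping with the Bounded Geodesic Image Theorem.
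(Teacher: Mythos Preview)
Your approach is essentially the paper's: apply Lemma~\ref{geo} to the sequence of twisting-curve images, use Lemma~\ref{dehn} together with equivariance to verify the annular-projection hypothesis, and then observe that the concatenated path $p, f_1 p, \dots, f_{2n}p$ realises the minimum length $2n(l-2)$ forced by Lemma~\ref{geo}. The paper carries this out with $X_0=N_1(\beta)$, $X_1=N_1(\alpha)$, $X_{2i}=f_{2i}N_1(\beta)$, $X_{2i+1}=f_{2i+1}N_1(\alpha)$, and the single computation $d_{f_{2i}\beta}(f_{2i-1}\alpha,f_{2i+1}\alpha)=d_\beta(\alpha,T_\beta^{b_i}\alpha)=|b_i|+2>2M+2$ (and its $\alpha$ analogue).

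The obstacle you flag is not real, and comes from an ambiguity in your setup: Lemma~\ref{geo} is applied \emph{only} to the sequence of twisting-curve images $\beta,\alpha,f_2\beta,f_3\alpha,\dots$, never to an ``interleaved'' sequence containing the $v_i$. Indeed, an interleaved sequence could not satisfy the hypothesis $d_{\mathcal C}(\alpha_j,\alpha_{j+1})\geq 3$, since $d(v_i,v_{i+1})=1$. The vertices $v_i$ and their $f_j$-images play no role in the hypothesis verification; they only furnish the explicit length-$(l-2)$ segments joining consecutive $X_j$'s, showing the lower bound from Lemma~\ref{geo} is attained. Once you restrict the Lemma~\ref{geo} sequence to the twisting curves, the check at every index reduces by equivariance to a single instance of Lemma~\ref{dehn}, exactly as you computed, and there is nothing further to verify.
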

 
\begin{proof}
	\begin{figure}[h]
	\centering
	\includegraphics[width = 0.7\textwidth]{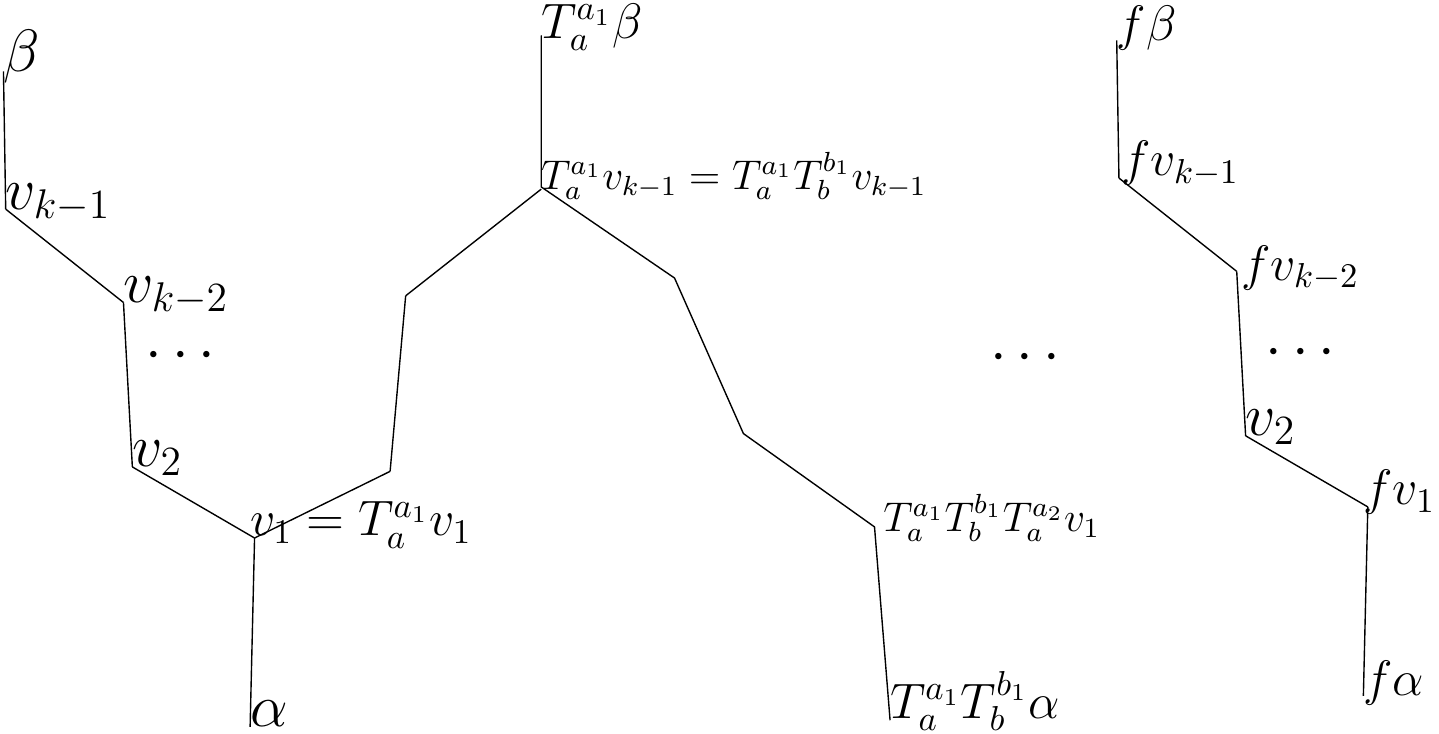}
	\caption[]{Geodesic axis of map $f$}
\end{figure}

	Let $C_A=N_1(\alpha)$, $C_B=N_1(\beta)$, and let $X_0=C_B$, $X_1=C_A$, $X_{2i}=f_{2i} C_B$, $X_{2i+1}=f_{2i+1} C_A$.
    Now shortest path between $X_i$ and $X_{i+1}$ will be length $l-2$, since it is isomorphic image of $C_A$ and $C_B$ by $f_{i}$.
	This path can be realized as $f_i p =(f_i v_1,\cdots, f_i v_{l-1})$.
	By canceling $f_{2i-1}$, we get following by Lemma \ref{dehn}:
    $$d_{f_{2i} \beta}(f_{2i-1} \alpha, f_{2i+1} \alpha)=d_{ \beta}( \alpha, T_\beta^{b_i} \alpha)=|b_i|+2>2M+2.$$
    Similarly $d_{f_{2i+1} \alpha}(f_{2i} \beta, f_{2i+2} \beta)>2M+2 $, thus conditions of Lemma \ref{geo} are satisfied.
    Let's pick a geodesic path between $v_1$ and $f v_1$. 	
    Then by Lemma \ref{geo} this path must meet all $X_i$ between them. 
    It must contain $2n$ paths of $X_i$ and $X_{i+1}$ and they should be all disjoint, so has a length of at least $2n(l-2)$.
    Concatenated path of $f_1 p$ to $f_{2n} p$ realizes this path, so we have $d(v_1, f v_1)=2n(l-2)$. 
    By our setting, same logic holds for $f^m$ for any positive $m$, thus it makes $d(v_1, f^m v_1)=2mn(l-2)$, which proves rest.
\end{proof}

This can be also applied to give bound to pseudo-Anosov maps made with more than two Dehn twists. 

\begin{THM} \label{manycurves}
		Consider curves $\alpha_1,\alpha_2,\cdots,\alpha_k$ which satisfy $d_{\mathcal{C}}(\alpha_i,\alpha_{i+1})\geq3$, $\alpha_{k}=\alpha_0$ and $\alpha_{k+1}=\alpha_1$. 
	For $f=T_{\alpha_1}^{n_1}T_{\alpha_1}^{n_2}\cdots T_{\alpha_k}^{n_k}$ 	with $|n_i| > 2M+2+d_{\alpha_i}(\alpha_{i-1},\alpha_{i+1})$, its translation length is bounded as follows:
	\begin{align*} 
		\Sigma_{i=1}^k d_{\mathcal{C}} (\alpha_i,\alpha_{i+1})-2k  \leq l_{\mathcal{C}}(f)  \leq \Sigma_{i=1}^k d_{\mathcal{C}}(\alpha_i,\alpha_{i+1}) ,
	\end{align*}
	 thus $f$ is a pseudo-Anosov map.
\end{THM}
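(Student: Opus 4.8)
The plan is to run the proof of Theorem~\ref{THM:main} with $k$ ``marker'' curves per period instead of two. Write $f_0=\Id$, $f_i=T_{\alpha_1}^{n_1}\cdots T_{\alpha_i}^{n_i}$ (so $f_k=f$), and, using that $T_{\alpha_i}$ fixes $\alpha_i$, set $X_i:=f_i(\alpha_i)=f_{i-1}(\alpha_i)$ for $1\le i\le k$ and $X_0:=\alpha_0=\alpha_k$; then $X_k=f(\alpha_k)=f(X_0)$. Extend to a bi-infinite family $Z_{qk+i}:=f^q(X_i)$ for $0\le i\le k-1$, so that $Z_{j+k}=f(Z_j)$, $Z_0=\alpha_k$, $Z_{mk}=f^m(\alpha_k)$. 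Since each $f_{i-1}$ is an isometry of $\mathcal C(S)$ and of every annular curve graph and is equivariant for subsurface projection, while $T_{\alpha_i}^{n_i}$ fixes $\alpha_i$, one obtains
\[
d_{\mathcal C}(X_{i-1},X_i)=d_{\mathcal C}(\alpha_{i-1},\alpha_i)
\quad\text{and}\quad
d_{X_i}(X_{i-1},X_{i+1})=d_{\alpha_i}\!\bigl(\alpha_{i-1},\,T_{\alpha_i}^{n_i}(\alpha_{i+1})\bigr);
\]
together with the periodicity $Z_{j+k}=f(Z_j)$ and a parallel computation at the period boundary (handled below), these determine every quantity needed for the sequence $(Z_j)$.

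For the upper bound, pick a geodesic $g_i$ from $\alpha_{i-1}$ to $\alpha_i$; then $f_0(g_1),f_1(g_2),\dots,f_{k-1}(g_k)$ concatenate to an edge path from $X_0$ to $X_k=f(X_0)$ of length $\sum_{i=1}^k d_{\mathcal C}(\alpha_{i-1},\alpha_i)=\sum_{i=1}^k d_{\mathcal C}(\alpha_i,\alpha_{i+1})$ (a cyclic relabeling). Applying $f^q$ and concatenating for $q=0,\dots,m-1$ joins $X_0$ to $f^m(X_0)$ by a path of length $m\sum_{i=1}^k d_{\mathcal C}(\alpha_i,\alpha_{i+1})$, so dividing by $m$ and letting $m\to\infty$ gives $l_{\mathcal C}(f)\le\sum_{i=1}^k d_{\mathcal C}(\alpha_i,\alpha_{i+1})$.

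For the lower bound, I would verify the hypotheses of Lemma~\ref{geo} for $(Z_j)$: the bound $d_{\mathcal C}(Z_j,Z_{j+1})\ge 3$ follows from the first identity above and $d_{\mathcal C}(\alpha_i,\alpha_{i+1})\ge 3$, and the annular condition reduces, via the second identity and periodicity, to the single estimate that, for each $i$, by Lemma~\ref{dehn} and the triangle inequality in the annular graph of $\alpha_i$,
\[
d_{\alpha_i}\!\bigl(\alpha_{i-1},T_{\alpha_i}^{n_i}(\alpha_{i+1})\bigr)
\ \ge\ d_{\alpha_i}\!\bigl(\alpha_{i+1},T_{\alpha_i}^{n_i}(\alpha_{i+1})\bigr)-d_{\alpha_i}(\alpha_{i-1},\alpha_{i+1})
\ =\ |n_i|+2-d_{\alpha_i}(\alpha_{i-1},\alpha_{i+1})\ >\ 2M+2,
\]
the last step being exactly the hypothesis $|n_i|>2M+2+d_{\alpha_i}(\alpha_{i-1},\alpha_{i+1})$ (finite since $\alpha_{i\pm1}$ meet $\alpha_i$). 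Then Lemma~\ref{geo} yields, for each $m$, a geodesic from $Z_0$ to $Z_{mk}$ meeting each $N_1(Z_j)$ in a point $p_j$ with $p_0=Z_0$, $p_{mk}=Z_{mk}$, the $p_j$ occurring in order along it and the $N_1(Z_j)$ pairwise disjoint; hence
\[
d_{\mathcal C}(Z_0,Z_{mk})\ \ge\ \sum_{j=0}^{mk-1}d_{\mathcal C}(p_j,p_{j+1})
\ \ge\ \sum_{j=0}^{mk-1}\bigl(d_{\mathcal C}(Z_j,Z_{j+1})-2\bigr)
\ =\ m\Bigl(\sum_{i=1}^k d_{\mathcal C}(\alpha_i,\alpha_{i+1})-2k\Bigr).
\]
Dividing by $m$ and letting $m\to\infty$ gives $l_{\mathcal C}(f)\ge\sum_{i=1}^k d_{\mathcal C}(\alpha_i,\alpha_{i+1})-2k$, which is at least $k>0$ since each $d_{\mathcal C}(\alpha_i,\alpha_{i+1})\ge 3$; hence $l_{\mathcal C}(f)>0$ and $f$ is pseudo-Anosov by the Masur--Minsky criterion.

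The computations here are short; the part that needs care is the equivariant bookkeeping — pinning down the curves $X_i$, confirming that $d_{\mathcal C}(X_{i-1},X_i)$ is not shortened by the intervening Dehn twists, and treating the period-boundary index $j\equiv 0\pmod k$, where $f^{-1}(X_{k-1})=T_{\alpha_k}^{-n_k}(\alpha_{k-1})$ and one applies the annular isometry $T_{\alpha_k}^{n_k}$ to rewrite $d_{\alpha_k}\!\bigl(T_{\alpha_k}^{-n_k}(\alpha_{k-1}),\alpha_1\bigr)$ as the displayed estimate with $i=k$. I expect this bookkeeping, rather than any inequality, to be the only real obstacle.
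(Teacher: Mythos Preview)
Your proposal is correct and follows essentially the same approach as the paper: define partial products $f_i$, set up the sequence of marker curves $f_i(\alpha_i)$, reduce the annular estimate to $d_{\alpha_i}(\alpha_{i-1},T_{\alpha_i}^{n_i}\alpha_{i+1})$ via equivariance, then combine Lemma~\ref{dehn} with the triangle inequality to feed into Lemma~\ref{geo}. Your write-up is in fact more careful than the paper's in two places---you spell out the upper bound by explicit concatenation of geodesics (the paper simply asserts it) and you handle the period-boundary index explicitly---but there is no substantive difference in strategy.
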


\begin{proof}
    Let $f_i = T_{\alpha_1}^{n_1}T_{\alpha_1}^{n_2}\cdots T_{\alpha_k}^{n_i}$, $X_i=f_i N_1(\alpha_i)$ with $Y_i$ be annular subsurface that has core $\beta _i = f_i \alpha_i $ where index is given $\mod k$. 
     Now $d_{\beta_i}(\beta_{i-1},\beta_{i+1})=d_{f_i \alpha_i}(f_{i-1} \alpha_{i-1}, f_{i+1} \alpha_{i+1}) $. 
    Since $\Mod{S}$ preserves relation between curves on $\mathcal{C}(S)$, deleting $f_{i-1}$ gives $d_{f_i \alpha_i}(f_{i-1} \alpha_{i-1}, f_{i+1} \alpha_{i+1}) = d_{\alpha_i}(\alpha_{i-1},T_i^{n_i}\alpha_{i+1})$. 
    From triangle inequality we get $$d_{\alpha_i}(\alpha_{i-1},T_i^{n_i}\alpha_{i+1}) \geq d_{\alpha_i}(\alpha_{i+1},T_i^{n_i}\alpha_{i+1}) - d_{\alpha_i}(\alpha_{i-1},\alpha_{i+1})>2M+2.$$
    Thus it satisfies the condition of Lemma \ref{geo}.
    Now pick any $\omega_0 \in X_0=N_1(\alpha_{0})$. Then we can see geodesic path from $\omega_{0}$ to $f^t \omega_{0}$ have intersection wit all $X_i$ for all $0<i<tk$. Thus we get
    $$ t(\Sigma_{i=1}^k d_{\mathcal{C}} (\alpha_i,\alpha_{i+1})-2k) \leq d_{\mathcal{C}}(\omega_{0},f^t \omega_{0}) \leq t(\Sigma_{i=1}^k d_{\mathcal{C}}(\alpha_i,\alpha_{i+1})). $$
    Dividing into $t$ and sending it to infinity gives us the desired result.
\end{proof}
Here we can see that filling two single curves is a special case of the theorem, where the lower bound coincides with the exact translation length.

Though it may not preserve a geodesic like in two curve case, we can still give bounds to stable translation lengths of two well-known constructions of pseudo-Anosov maps, Thurston's construction and Penner's construction, under some conditions. 
To track the length on the curve graph of the subsurface in case of multicurve's Dehn twist, we need the following lemma from \cite{watanabe2020pseudo}.

\begin{LEM}\label{multidehn}\cite{watanabe2020pseudo}
	For multicurve $A=\{\alpha_1,\cdots,\alpha_n\}$. Then for any curve $\gamma$ intersecting $\alpha_1$ transversely let $f= T_{\alpha_1}^{a_1}\cdots T_{\alpha_n}^{a_n}$
	$$d_{\alpha_1}(\gamma ,f(\gamma )) \geq |a_1|$$
	 
\end{LEM}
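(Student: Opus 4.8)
My plan is to peel the single power $T_{\alpha_1}^{a_1}$ off the front of $f$ and argue that everything that remains is essentially invisible to the subsurface projection at $\alpha_1$. Write $f = T_{\alpha_1}^{a_1}\,g$ with $g = T_{\alpha_2}^{a_2}\cdots T_{\alpha_n}^{a_n}$. Since $A$ is a multicurve, every $\alpha_j$ with $j\ge 2$ is disjoint from $\alpha_1$, so each $T_{\alpha_j}$ ($j\ge2$) can be represented by a homeomorphism supported in a thin annular neighborhood of $\alpha_j$ that avoids a fixed annular neighborhood of $\alpha_1$; hence $g$ is supported in a subsurface $Z$ disjoint from $\alpha_1$. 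In particular $g$ fixes $\alpha_1$, we have $i(g\gamma,\alpha_1) = i(\gamma,\alpha_1) > 0$, and both $T_{\alpha_1}$ and $g$ induce isometries of the curve graph of the annular subsurface around $\alpha_1$. The statement I want to reduce to is the uniform bound
\[
d_{\alpha_1}(\delta, g\delta) \le 2 \qquad\text{for every curve }\delta\text{ crossing }\alpha_1 .
\]

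Granting this bound the lemma follows at once. Since $f\gamma = T_{\alpha_1}^{a_1}(g\gamma)$ and $g\gamma$ crosses $\alpha_1$, Lemma \ref{dehn} gives $d_{\alpha_1}\bigl(g\gamma,\ T_{\alpha_1}^{a_1}g\gamma\bigr) = |a_1| + 2$, so by the triangle inequality
\[
d_{\alpha_1}(\gamma, f\gamma) \ \ge\ d_{\alpha_1}\bigl(g\gamma,\ T_{\alpha_1}^{a_1}g\gamma\bigr) - d_{\alpha_1}(\gamma, g\gamma) \ =\ (|a_1| + 2) - d_{\alpha_1}(\gamma, g\gamma) \ \ge\ |a_1| .
\]
(The same computation shows the inequality is essentially an equality up to the displacement coming from $g$, which is why the statement is phrased as a bound.)

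The real content, and the step I expect to be the main obstacle, is the displacement bound for $g$. I would prove it by passing to the annular cover $p\co\widehat Y \to S$ associated with $\alpha_1$, together with its compactification $\overline Y$, a closed annulus with $\partial\overline Y = C_1\sqcup C_2$. As $g$ is supported in $Z$, disjoint from $\alpha_1$, its lift $\widehat g$ is supported in $p^{-1}(Z)$, which misses the core annulus. Because each $\alpha_j$ ($j\ge 2$) is distinct from $\alpha_1$ and disjoint from it, no component of $p^{-1}(\alpha_j)$ is a circle, and every component is a properly embedded arc with both endpoints on the same boundary circle of $\overline Y$ (an arc meeting both circles would cross the core, forcing $i(\alpha_j,\alpha_1)\neq 0$); such an arc is boundary-parallel in the annulus $\overline Y$. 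Hence $\widehat g$ is a product of shears supported in disjoint bands, each a regular neighborhood of a boundary-parallel arc, and one checks that any homeomorphism of $\overline Y$ of this form displaces every crossing arc by at most $2$ in the annular curve graph. This is precisely the kind of uniform estimate on how a mapping class supported off $\alpha$ affects $\pi_\alpha$ that is implicit in the Masur--Minsky subsurface-projection machinery; it is the only genuinely new input, and making the constant $2$ (rather than merely ``bounded'') come out cleanly is the point where I would be most careful.
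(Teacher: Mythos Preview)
The paper does not give its own proof of this lemma; it is quoted from Watanabe \cite{watanabe2020pseudo} and used as a black box, so there is no argument in the paper to compare yours against.

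That said, your approach is the natural one and it is correct. Factoring $f=T_{\alpha_1}^{a_1}g$ and combining Lemma~\ref{dehn} with the triangle inequality cleanly reduces the statement to the displacement bound $d_{\alpha_1}(\gamma,g\gamma)\le 2$ for $g$ a multitwist about curves disjoint from $\alpha_1$, and your annular-cover picture is the right way to see this. To close the step you flagged as delicate: observe that $T_{\alpha_j}$ is isotopic to the identity through homeomorphisms supported in $N(\alpha_j)$ (the fractional twists $(s,\theta)\mapsto(s,\theta+ts)$), and this isotopy lifts to $\overline Y$ with support in $p^{-1}(N(\alpha_j))$. Since each lifted time-$t$ map is a bounded shear on each strip, it extends by the identity to $\partial\overline Y$; thus $\tilde g$ is isotopic to the identity \emph{rel} $\partial\overline Y$. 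Consequently $\tilde g$ fixes every essential arc class in $\mathcal C(Y_{\alpha_1})$, so $\pi_{\alpha_1}(g\gamma)=\pi_{\alpha_1}(\gamma)$ and the bound follows (indeed with the sharper constant $\operatorname{diam}\pi_{\alpha_1}(\gamma)$). So the point you were most worried about does go through, and once it is made explicit your write-up is complete.
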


On the curve graph of an annular subsurface, while $n$ times of Dehn twist about the core of an annulus sends an image to distance $|n|+2$ place, Dehn twists about curves that don't intersect the core have a minor effect.
For $A, B \subset \mathcal{C}(S)$, let's define $\dist(A,B)=\inf \{ d_{\mathcal{C}}(a,b)|a\in A, b\in B\}$.

\begin{THM}\label{THM:multicurve}
	Let filling multicurves $A=\{\alpha_1,\cdots,\alpha_m\}$, $B=\{\beta_1,\cdots,\beta_n\}$ satisfy $\dist(A,B)=l \geq 3 $. 
	Let $f=f_1\cdots f_{2k}$, where $f_{2i-1}, f_{2i}$ are a products of Dehn twists by curves in $A$, $B$ respectively and each $f_i$ contains at least one Dehn twist $T_{\gamma}^t$ with $|t|>2M+3$.
    Then	
    $$2kl-4k \leq  l_{\mathcal{C}}(f)  \leq 2kl,$$
    thus $f$ is a pseudo-Anosov map.
\end{THM}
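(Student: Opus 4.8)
The plan is to prove the two inequalities separately: the upper bound $l_{\mathcal{C}}(f)\le 2kl$ by producing an explicit (non-geodesic) path from a suitable curve to its $f$-image of length $2kl$, and the lower bound $l_{\mathcal{C}}(f)\ge 2kl-4k$ by running the Mangahas-type argument used for Theorem~\ref{manycurves}, with Lemma~\ref{multidehn} replacing Lemma~\ref{dehn} so as to handle twists about multicurves. As a preliminary normalization, since the curves of a multicurve are pairwise disjoint their Dehn twists commute, so I may assume $f_j=\prod_{\gamma\in A}T_\gamma^{n_{j,\gamma}}$ for $j$ odd (and the analogous product over $B$ for $j$ even); by hypothesis one exponent is large, say $|n_{j,c_j}|>2M+3$, and I write $c_j$ for that curve and $e_j=n_{j,c_j}$, so $c_j\in A$ for $j$ odd and $c_j\in B$ for $j$ even. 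Put $F_j=f_1\cdots f_j$ and $F_0=\Id$.

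For the upper bound I would fix $a^*\in A$ and $b^*\in B$ realizing $d_{\mathcal{C}}(a^*,b^*)=\dist(A,B)=l$. The point is that $f_j$ fixes every curve of $A$ when $j$ is odd and every curve of $B$ when $j$ is even, so $F_ja^*=F_{j-1}a^*$ for $j$ odd and $F_jb^*=F_{j-1}b^*$ for $j$ even. Concatenating geodesics through $b^*=F_0b^*,\,F_1a^*,\,F_2b^*,\,F_3a^*,\dots,F_{2k}b^*=fb^*$, the $j$-th of these $2k$ segments is $F_{j-1}$ applied to a geodesic between $a^*$ and $b^*$, hence has length $l$; the whole path has length $2kl$, so $d_{\mathcal{C}}(b^*,fb^*)\le 2kl$ and therefore $l_{\mathcal{C}}(f)\le 2kl$. (This half uses neither hyperbolicity of $\mathcal{C}$ nor the size of the exponents.)

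For the lower bound I would apply the normalization to $f^t=\hat f_1\cdots\hat f_{2kt}$ with partial products $\hat F_j$ and dominant data $\hat c_j,\hat e_j$ repeating with period $2k$, and set $\delta_j=\hat F_{j-1}\hat c_j$; note $\delta_j=\hat F_j\hat c_j$ as well, since $\hat f_j$ fixes $\hat c_j$. I then verify that the annular subsurfaces $X_j=N_1(\delta_j)$ meet the hypotheses of Lemma~\ref{geo}. First, $d_{\mathcal{C}}(\delta_j,\delta_{j+1})=d_{\mathcal{C}}(\hat c_j,\hat c_{j+1})\ge\dist(A,B)=l\ge 3$, because $\hat c_j$ and $\hat c_{j+1}$ lie in different multicurves. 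Second, pushing everything by $\hat F_{j-1}^{-1}$ and using that $\hat f_{j-1}$ fixes $\hat c_{j-1}$ (same multicurve),
\[
d_{\delta_j}(\delta_{j-1},\delta_{j+1})=d_{\hat c_j}(\hat c_{j-1},\hat f_j\hat c_{j+1})\ge d_{\hat c_j}(\hat c_{j+1},\hat f_j\hat c_{j+1})-d_{\hat c_j}(\hat c_{j-1},\hat c_{j+1}).
\]
Here $\hat c_{j+1}$ meets $\hat c_j$ transversely (their $\mathcal{C}$-distance is $\ge 3$), so Lemma~\ref{multidehn}, applied after reordering $\hat f_j$ so that $T_{\hat c_j}^{\hat e_j}$ comes first, bounds the first term below by $|\hat e_j|>2M+3$; and $\hat c_{j-1},\hat c_{j+1}$ lie in a common multicurve, hence are disjoint, so their projections to the annulus around $\hat c_j$ have diameter at most $1$. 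Thus $d_{\delta_j}(\delta_{j-1},\delta_{j+1})>2M+2$. Lemma~\ref{geo} then produces a geodesic from $\delta_1$ to $\delta_{2kt+1}$ meeting the pairwise disjoint sets $X_1,\dots,X_{2kt+1}$ in order, so its length is at least $\sum_j d_{\mathcal{C}}(X_j,X_{j+1})\ge 2kt(l-2)$. Since $\delta_1=c_1$ and $\delta_{2kt+1}=f^tc_1$ by periodicity, $d_{\mathcal{C}}(c_1,f^tc_1)\ge 2kt(l-2)$, and dividing by $t$ gives $l_{\mathcal{C}}(f)\ge 2k(l-2)=2kl-4k$.

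The main obstacle is the lower bound, and within it the bookkeeping of subsurface projections through the partial products together with one structural observation: the two dominant curves flanking $\hat c_j$, namely $\hat c_{j-1}$ and $\hat c_{j+1}$, lie in the same multicurve, so their annular projection to $\hat c_j$ costs only $1$; this is exactly what forces the hypothesis $|t|>2M+3$ rather than the $|t|>2M+2$ one might naively expect from Lemma~\ref{geo}. Combining the two bounds gives $l_{\mathcal{C}}(f)\ge 2k(l-2)>0$ since $l\ge 3$, so $f$ is pseudo-Anosov by the Masur--Minsky criterion \cite{masur2000geometry}.
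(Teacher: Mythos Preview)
Your proof is correct and follows essentially the same route as the paper's: both the paper and you set up the sequence of dominant curves $\gamma_i$ (your $\hat c_j$), apply Lemma~\ref{multidehn} to get $d_{\gamma_i}(\gamma_{i+1},f_i\gamma_{i+1})>2M+3$, subtract the cost of moving from $\gamma_{i+1}$ to $\gamma_{i-1}$ in the annular projection (at most $1$ since they lie in a common multicurve), and feed the resulting $>2M+2$ into Lemma~\ref{geo}. Your write-up is in fact more explicit than the paper's on two points---you spell out the path realizing the upper bound via $a^*,b^*$, and you make the ``same multicurve $\Rightarrow$ disjoint $\Rightarrow$ annular distance $\le 1$'' step explicit---but the underlying argument is identical.
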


\begin{proof}
    Let $F_i=f_1\cdots f_i$, and $X_i=F_i N_1(\gamma_i)$, where $\gamma_i$ is a curve from $f_i$ which Dehn twist's exponent's absolute value is largest. 
    Here $\gamma_i$ is either element of $A$ or $B$, thus $A\subset N_1(\gamma_i)$ or $B\subset N_1(\gamma_i)$ since $A$ and $B$ are multicurves. 
    From lemma \ref{multidehn} we get $d_{\gamma_i}(\gamma_{i+1},f_i(\gamma_{i+1}))>2M+3$, thus $d_{\gamma_i}(\gamma_{i-1},f_i(\gamma_{i+1}))>2M+2$. 
    Letting $Y_i$ as an annular subsurface around core $F_i \gamma_i $, $X_i, Y_i$ satisfies conditions of lemma \ref{geo}. 
    Any geodesic connecting $N_1(\gamma_1)$ to $f N_1(\gamma_1)$ must have nonempty intersection with $X_i$. 
    On our geodesic, distance between intersection of $X_{i}$ and $X_{i+1}$ is bounded between $l-2$ and $l$. 
    Thus we get following
    $$2kl-4k \leq \dist(X_1,f X_1)  \leq 2kl. $$
    All our logic still holds for $f^t$, thus we can expand the situation to
    $$t(2kl-4k) \leq \dist(X_1,f^t X_1)  \leq 2tkl. $$
    Dividing all sides by $t$ gives us the desired result.
\end{proof}
    
\begin{THM}\label{THM:multicurves}
	Let multicurves $C_1\cdots C_n$ satisfy $\dist(C_i,C_{i+1} )\geq 3 $, $C_n=C_0$ and $C_{n+1}=C_1$. 
	Let $f=f_1\cdots f_{n}$ where $f_i$ is product of Dehn twists by curve in $C_i$ where each $f_i$ contains at least one Dehn twist $T_{\gamma}^t$ with $|t|>2M+3+d_{\gamma}(C_{i-1},C_{i+1})$. 
    Then following holds
	\begin{align*} 
	\Sigma_{i=1}^n \dist (C_i,C_{i+1})-2n  \leq l_{\mathcal{C}}(f)  \leq \Sigma_{i=1}^n \dist(C_i,C_{i+1}) ,
\end{align*}
thus $f$ is a pseudo-Anosov map.
\end{THM}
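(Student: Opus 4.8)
The approach is to mimic the proof of Theorem \ref{manycurves} almost verbatim, replacing single curves $\alpha_i$ by multicurves $C_i$ and Lemma \ref{dehn} by Lemma \ref{multidehn}. The only genuine novelty is checking that the distinguished twisting curve inside each syllable still feeds the hypotheses of Mangahas' Lemma \ref{geo}. So first I would set up notation: write $f_i$ for the $i$-th syllable, $F_i=f_1\cdots f_i$, pick inside each $f_i$ a curve $\gamma_i\in C_i$ whose Dehn twist appears with exponent $t$ satisfying $|t|>2M+3+d_{\gamma_i}(C_{i-1},C_{i+1})$, and set $X_i=F_i N_1(\gamma_i)$ with $Y_i$ the annular subsurface with core $F_i\gamma_i$, all indices mod $n$. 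Since $C_i$ is a multicurve containing $\gamma_i$, we have $C_i\subset N_1(\gamma_i)$, so $X_i$ is a genuine one-neighborhood in the curve graph containing the whole (translated) multicurve $C_i$.

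Next I would verify the subsurface-distance hypotheses of Lemma \ref{geo}. Because $\Mod(S)$ acts by isometries preserving subsurface projections, $d_{F_i\gamma_i}(F_{i-1}\gamma_{i-1},F_{i+1}\gamma_{i+1})=d_{\gamma_i}(\gamma_{i-1},f_i\gamma_{i+1})$ after stripping off $F_{i-1}$ (note $\gamma_{i-1}$ is disjoint from, or equal to, $\gamma_i$'s complementary data only up to the multicurve structure — one should phrase this via $\gamma_{i+1}$ intersecting $\gamma_i$, which holds since $\dist(C_i,C_{i+1})\ge 3$ forces $C_i\cup C_{i+1}$ to fill and in particular $\gamma_i$ meets $\gamma_{i+1}$). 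Lemma \ref{multidehn} gives $d_{\gamma_i}(\gamma_{i+1},f_i\gamma_{i+1})\ge|t|>2M+3+d_{\gamma_i}(C_{i-1},C_{i+1})$, and then the triangle inequality
\[
d_{\gamma_i}(\gamma_{i-1},f_i\gamma_{i+1})\ \ge\ d_{\gamma_i}(\gamma_{i+1},f_i\gamma_{i+1})-d_{\gamma_i}(\gamma_{i-1},\gamma_{i+1})\ >\ 2M+2,
\]
using $d_{\gamma_i}(\gamma_{i-1},\gamma_{i+1})\le d_{\gamma_i}(C_{i-1},C_{i+1})$ (in fact they should be taken equal by choosing $\gamma_{i\pm1}$ to realize the max). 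Combined with $d_{\mathcal C}(\gamma_i,\gamma_{i+1})\ge\dist(C_i,C_{i+1})\ge 3$, the hypotheses of Lemma \ref{geo} hold for the family $\{X_i\}$.

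Then Lemma \ref{geo} tells us that any geodesic from a vertex $\omega_0\in X_0$ to $f^t\omega_0=F_{tn}\omega_0$ meets every $X_j$, $0<j<tn$, in the correct cyclic order, and the $X_j$ are pairwise disjoint. Along such a geodesic, the segment between (a vertex in) $X_i$ and (a vertex in) $X_{i+1}$ has length at least $\dist(C_i,C_{i+1})-2$, since $X_i,X_{i+1}$ are translates of one-neighborhoods of $C_i,C_{i+1}$ and $\dist(C_i,C_{i+1})$ is the multicurve-to-multicurve distance; summing over one period and over $t$ periods gives the lower bound $t(\sum_{i=1}^n\dist(C_i,C_{i+1})-2n)\le d_{\mathcal C}(\omega_0,f^t\omega_0)$. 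For the upper bound, concatenate for each $i$ a geodesic from $F_i\gamma_i$ to $F_{i+1}\gamma_{i+1}$; since $F_i$ is an isometry and $d_{\mathcal C}(\gamma_i,f_{i+1}\gamma_{i+1})\le$ ... — more cleanly, $d_{\mathcal C}(F_i\gamma_i,F_{i+1}\gamma_{i+1})=d_{\mathcal C}(\gamma_i, f_{i+1}\gamma_{i+1})$ and one bounds this by $\dist(C_i,C_{i+1})$ by observing $\gamma_i\in C_i$ and $f_{i+1}\gamma_{i+1}\in f_{i+1}C_{i+1}$, a translate, so the realizing geodesic has length $\le\dist(C_i,C_{i+1})$; telescoping over a period yields $d_{\mathcal C}(\omega_0,f^t\omega_0)\le t\sum_{i=1}^n\dist(C_i,C_{i+1})+O(1)$. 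Dividing by $t$ and letting $t\to\infty$ gives
\[
\sum_{i=1}^n\dist(C_i,C_{i+1})-2n\ \le\ l_{\mathcal C}(f)\ \le\ \sum_{i=1}^n\dist(C_i,C_{i+1}),
\]
and positivity of the lower bound (which requires each $\dist(C_i,C_{i+1})\ge 3$, so the sum exceeds $3n>2n$) forces $f$ to be pseudo-Anosov by Masur--Minsky.

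The step I expect to be the main obstacle is the bookkeeping in the upper bound: unlike the two-curve Theorem \ref{THM:main}, the concatenated path here need not be geodesic, so one must be careful that the $O(1)$ additive error (coming from passing between the chosen vertex $\omega_0$ and the orbit of $\gamma_0$, and from the fact that $\dist$ compares multicurves, not the specific vertices the geodesic passes through) genuinely washes out after dividing by $t$ — and also that "distance between the intersection of $X_i$ and $X_{i+1}$ is at least $\dist(C_i,C_{i+1})-2$" is justified, i.e. that a vertex of $X_i$ is within $1$ of the multicurve $C_i$'s translate and similarly for $X_{i+1}$, so the triangle inequality loses exactly $2$. Everything else is a direct transcription of the arguments already given for Theorems \ref{manycurves} and \ref{THM:multicurve}.
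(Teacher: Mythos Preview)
Your proposal is correct and follows essentially the same route as the paper's own proof: choose a distinguished curve $\gamma_i\in C_i$ in each syllable, set $F_i=f_1\cdots f_i$ and $X_i=F_iN_1(\gamma_i)$, use Lemma~\ref{multidehn} together with the triangle inequality in the $\gamma_i$--annular projection to feed Lemma~\ref{geo}, and then read off the two bounds from the resulting chain of $1$--neighbourhoods. The paper carries out exactly these steps (it specifies $\gamma_i$ as the curve of largest exponent in $f_i$, but this is immaterial), and is in fact terser than you are on both bounds. Your flagged ``main obstacle'' is apt: the key identity that makes the bookkeeping work is that $f_{i+1}$ fixes $\gamma_{i+1}$, so $F_i\gamma_{i+1}=F_{i+1}\gamma_{i+1}$ and hence $d_{\mathcal C}(F_i\gamma_i,F_{i+1}\gamma_{i+1})=d_{\mathcal C}(\gamma_i,\gamma_{i+1})$; once you use this, the concatenated path through the $F_i\gamma_i$ telescopes cleanly and the upper bound follows (up to the diameter~$\le 1$ of each multicurve, which the paper absorbs without comment).
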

\begin{proof}
    Let $F_i=f_1\cdots f_i$, and $X_i=F_i N_1(\gamma_i)$, where $\gamma_i$ is a curve from $f_i$ which Dehn twist's exponent's absolute value is largest. 
    From lemma \ref{multidehn} we get $d_{\gamma_i}(\gamma_{i+1},f_i(\gamma_{i+1}))>2M+3+d_{\gamma}(C_{i-1},C_{i+1})$. Then by canceling $F_{i-1}$ and using triangle inequality, we get following: 
    \begin{align*}
     d_{F_i\gamma_i}(F_{i-1}\gamma_{i-1},F_{i+1}\gamma_{i+1}))&=d_{\gamma_i}(\gamma_{i-1},f_i(\gamma_{i+1})) \\
    & =d_{\gamma_i}((\gamma_{i+1},f_i(\gamma_{i+1}))-d_{\gamma_i}(\gamma_{i-1},\gamma_{i+1})  \\
    & > 2M+2+d_{\gamma}(C_{i-1},C_{i+1})-d_{\gamma_i}(\gamma_{i-1},\gamma_{i+1})  \\
    & \geq 2M+2.
    \end{align*}
    
    Thus letting $Y_i$ as annulur subsurface around core $F_i \gamma_i $, our $X_i, Y_i$ satisfy conditions of lemma \ref{geo}. 
    $$\Sigma_{i=1}^n \dist (C_i,C_{i+1})-2n  \leq d_{\mathcal{C}}(X_1, f X_1)  \leq \Sigma_{i=1}^n \dist(C_i,C_{i+1})  $$
    All our condition still holds for $f^k$, thus changing to $f^k$ and dividing all the sides by $k$ gives us desired result.
\end{proof}

\section{Applications}
\subsection{Minimal word}
One direction of understanding translation length is to find the smallest translation length element among the ones that use the same number of each Dehn twist. We give a partial answer to this question.
\begin{THM}
	Let $A=\{a_1,\cdots,a_m\}$ $B=\{b_1,\cdots,b_n\} $ be filling multicurves.
    For pseudo-Anosov $f$ that satisfies the condition in the Theorem \ref{THM:multicurve}, let $r_i, s_j $ be a total amount of powers of Dehn twist in $a_i, b_j$ respectively. Let $f'=T_{a_1}^{r_1}\cdots T_{a_n}^{r_m}T_{b_1}^{s_1}\cdots T_{b_m}^{s_n}$. Then
    
	$$ l_\mathcal{C}(f) \geq l_\mathcal{C}(f') .$$
\end{THM}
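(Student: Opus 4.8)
The plan is to compare the two pseudo-Anosov maps $f$ and $f'$ through the bounds already established in Theorem \ref{THM:multicurve}, exploiting the fact that $f'$ is built from the \emph{same total twisting data} but arranged so that all $A$-twists are grouped into a single syllable and all $B$-twists into a single syllable. First I would observe that $f'$ has the form $f' = f_1' f_2'$ where $f_1' = T_{a_1}^{r_1}\cdots T_{a_m}^{r_m}$ is a product of Dehn twists about curves in $A$ and $f_2' = T_{b_1}^{s_1}\cdots T_{b_n}^{s_n}$ is a product of Dehn twists about curves in $B$; this is exactly the shape required by Theorem \ref{THM:multicurve} with $k=1$. The hypothesis on $f$ forces each $f_i$ (hence the aggregate $f_1'$ and $f_2'$) to contain some twist exponent with absolute value exceeding $2M+3$, so the hypotheses of Theorem \ref{THM:multicurve} are met for $f'$, giving $2l-4 \leq l_{\mathcal C}(f') \leq 2l$, where $l = \dist(A,B)$.

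Next I would apply Theorem \ref{THM:multicurve} to $f$ itself: since $f = f_1\cdots f_{2k}$ with alternating $A$- and $B$-syllables, we get $2kl-4k \leq l_{\mathcal C}(f) \leq 2kl$. The crude comparison $l_{\mathcal C}(f) \geq 2kl - 4k$ against $l_{\mathcal C}(f') \leq 2l$ only yields the claim when $k$ is large enough (roughly $k \geq 2$ already gives $2kl-4k \geq 2l$ as soon as $l \geq 4$, and one checks the boundary cases $l=3$, $k$ small separately). So the key step is to handle the small-$k$ cases, and in particular $k=1$: when $f = f_1 f_2$ itself has only two syllables, $f$ and $f'$ have the same cyclic structure, and I would argue that grouping the twists about a common multicurve cannot increase the translation length. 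Concretely, for $k=1$ both maps are twists of the same two filling multicurves $A, B$ with the same total exponents, so Theorem \ref{thurston}/Penner-type reasoning is not directly available (the maps need not be conjugate), but the \emph{lower bound} $2l-4$ and \emph{upper bound} $2l$ for both show $l_{\mathcal C}(f) \leq 2l$ and $l_{\mathcal C}(f') \geq 2l-4$; to get $l_{\mathcal C}(f) \geq l_{\mathcal C}(f')$ sharply one needs a finer argument, namely that the geodesic-fellow-travelling path for $f'$ has length exactly governed by how many times one must "cross from $A$ to $B$ and back," which is the same count $2$ for $f$ and $f'$ when $k=1$.

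The main obstacle I anticipate is precisely this $k=1$ (and borderline $l=3$, small $k$) case: the general bounds $2kl-4k \leq l_{\mathcal C}(\cdot) \leq 2kl$ have a gap of $4k$, and when $k$ is small this gap is not negligible relative to the $2l$ scale, so one cannot conclude by pure interval comparison. To close it I would re-examine the proof of Theorem \ref{THM:multicurve}: the lower bound comes from counting disjoint subsurface-crossings $X_1, \dots, X_{nk}$ along a geodesic from $N_1(\gamma_1)$ to $fN_1(\gamma_1)$, and the point is that consolidating several consecutive $A$-twists (resp. $B$-twists) into one syllable merges several of the $X_i$ but preserves the essential alternation count, so the lower bound for $f$ dominates the upper bound for $f'$ exactly because $f$ alternates $A \to B \to A \to \cdots$ at least as often as $f'$ does. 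I would make this precise by showing that the number of "color changes" in the syllable sequence of $f$ is $2k \geq 2 = $ number of color changes of $f'$, and that $l_{\mathcal C}$ of such a product is, up to the universal additive error, proportional to the color-change count times $(l-\text{const})$; assembling these inequalities in the right order yields $l_{\mathcal C}(f) \geq l_{\mathcal C}(f')$.
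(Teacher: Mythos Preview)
Your approach for $k\ge 2$ is the same as the paper's: apply the two-sided bounds of Theorem~\ref{THM:multicurve} to both $f$ and $f'$ and compare the lower bound $2kl-4k$ for $f$ against the upper bound $2l$ for $f'$. (Your caution about the borderline case $l=3$, $k=2$ is in fact warranted---the paper simply asserts the inequality for all $k\ge 2$ without isolating that case---so your instinct there is sharper than the paper's write-up.)

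The genuine gap is your treatment of $k=1$. You explicitly say that for $k=1$ ``the maps need not be conjugate'' and then try to rescue the inequality with a colour-change counting heuristic. This is where you miss the key observation: the curves in a multicurve are pairwise disjoint, so the Dehn twists $T_{a_i}$ all commute with one another, and likewise the $T_{b_j}$ commute among themselves. Consequently, when $k=1$ the single $A$-syllable $f_1$ can be rewritten as $T_{a_1}^{r_1}\cdots T_{a_m}^{r_m}$ and the single $B$-syllable $f_2$ as $T_{b_1}^{s_1}\cdots T_{b_n}^{s_n}$; hence $f=f_1f_2=f'$ on the nose (at worst conjugate, if one allows the syllables in the other order). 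This gives $l_{\mathcal C}(f)=l_{\mathcal C}(f')$ immediately, which is exactly how the paper handles $k=1$. Your proposed colour-change argument, by contrast, would at best reproduce the same window $2l-4\le l_{\mathcal C}(\cdot)\le 2l$ for both maps and therefore cannot separate them; the commutativity of twists within a multicurve is the missing ingredient.
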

\begin{proof}
    Let $\dist(A,B)=l$, and $k$ be number of interchange to $A$ to $B$ in $f$. Then by Theorem \ref{THM:multicurve}, $l_{\mathcal{C}}(f')<2l$ and $2kl-4k \leq  l_{\mathcal{C}}(f)  $.  Thus it is guaranteed that if $k \geq 2$, $ l_\mathcal{C}(f) > l_\mathcal{C}(f') .$ On the other hand if $k=1$, it means that $f$ is actually conjugation of $f'$ since in each multicurves Dehn twists commute. Conjugating element will have same translation length, thus $ l_\mathcal{C}(f) = l_\mathcal{C}(f') .$ 
\end{proof}

\subsection{Ratio Optimizer}
Let $l_\mathcal{T}(f) $ be an asymptotic translation length of $f$ on Teichm\"uller space.  Gadre-Hironaka-Kent-Leininger proved that minimal value of ratio $\tau(f)=\frac{l_\mathcal{T}(f)}{l_\mathcal{C}(f)}$ is asymptotic to $\log (|\chi(S)|)$. \cite{gadre2013lipschitz}. 
Aougab-Taylor used Mangahas' Lemma \ref{geo} to find a large class of pseudo-Anosov maps that optimize this ratio \cite{aougab2015pseudo}. 
Here we expand the class of ratio optimizers using another result from Aougab-Taylor.
\begin{THM}[\cite{aougab2014small}]\label{THM:minint}
	Let $\omega(g,p)=3g+p-4$. Then for any $g,p$ with $\omega(g,p)>0$, there exists an infinite geodesic ray $\gamma={v_0,v_1,v_2\cdots}$ such that 
	
	$$i(v_i,v_j)=O(\omega^{|j-i|-2}) $$
\end{THM}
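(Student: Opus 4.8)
The plan is to realize the ray $\gamma$ as one half of the curve‑graph axis of a carefully chosen pseudo‑Anosov map, and to read off the intersection bounds from the exponential growth of intersection numbers under iteration of that map. First I would fix a pseudo‑Anosov $\phi$ on $S=S_{g,p}$ that is a \emph{ratio optimizer} in the sense of Gadre--Hironaka--Kent--Leininger, i.e.\ with $\tau(\phi)=l_{\mathcal{T}}(\phi)/l_{\mathcal{C}}(\phi)$ driven down to the order of its lower bound $\asymp\log|\chi(S)|\asymp\log\omega$, and which in addition preserves a genuine geodesic axis in $\mathcal{C}(S)$. Both demands can be met by the pseudo‑Anosovs produced earlier in this paper: take $\phi=T_a^{a_1}T_b^{b_1}\cdots T_a^{a_n}T_b^{b_n}$ for a filling pair (or filling multicurves) $\alpha,\beta$ with $d_{\mathcal{C}}(\alpha,\beta)=l$ and all $|a_i|,|b_i|$ just above $2M$. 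Theorem \ref{THM:main} (or Theorems \ref{THM:multicurve}--\ref{THM:multicurves} in the multicurve case) then simultaneously forces a geodesic axis and pins down $\ell:=l_{\mathcal{C}}(\phi)=2n(l-2)$, while the stretch factor $\lambda=e^{l_{\mathcal{T}}(\phi)}$ is, via Theorem \ref{thurston}, the top eigenvalue of a product of $2n$ elementary $\PSL(2,\R)$ matrices with entries of size $O(|a_i|\, i(\alpha,\beta))$, hence $\log\lambda=O\!\big(2n\log(i(\alpha,\beta))\big)$. Choosing $\alpha,\beta$ with the smallest possible intersection number among filling data realizing distance $l$, and letting $l$ grow, pushes $\log\lambda/\ell$ down to $\asymp\log\omega$, so $\phi$ is a ratio optimizer. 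Writing its axis as a bi‑infinite geodesic $\cdots,v_{-1},v_0,v_1,\cdots$ with $\phi(v_i)=v_{i+\ell}$, the restriction $\gamma=\{v_0,v_1,v_2,\dots\}$ is a genuine geodesic ray.

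Next I would bound $i(v_i,v_j)$. Writing $|i-j|=q\ell+r$ with $0\le r<\ell$ and translating by a suitable power of $\phi$, one reduces to $i(v_i,v_j)=i(v_s,\phi^{q}v_t)$ with $v_s,v_t$ ranging over the finite fundamental segment $v_0,\dots,v_\ell$. The analytic input is the standard fact that for a pseudo‑Anosov $\phi$ with stretch factor $\lambda$ and any two curves $\mu,\nu$ one has $i(\mu,\phi^{q}\nu)\le C(\phi,\mu,\nu)\,\lambda^{q}$, read off from the transverse measured foliations or from an invariant train track / Markov partition. Applying this to the finitely many pairs $(v_s,v_t)$ gives a single constant $C=C(\phi)$ with $i(v_i,v_j)\le C\,\lambda^{|i-j|/\ell}=C\,\big(\lambda^{1/\ell}\big)^{|i-j|}$ for all $i,j$.

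Since $\phi$ is a ratio optimizer, $\lambda^{1/\ell}=e^{\,l_{\mathcal{T}}(\phi)/l_{\mathcal{C}}(\phi)}=e^{\tau(\phi)}$ is comparable to $\omega$ (and, with the sharp constant in the GHKL bound, at most $\omega$), so the previous estimate already yields $i(v_i,v_j)=O(\omega^{|i-j|})$. Handling the finitely many short ranges $|i-j|\le 2$ directly --- there consecutive vertices of a geodesic merely intersect, so $i(v_i,v_j)$ is a fixed constant --- lets one absorb $C$ together with two extra factors of $\omega$ and upgrade the bound to $i(v_i,v_j)=O(\omega^{|i-j|-2})$, as claimed.

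The main obstacle is the combination of the first step with the exact exponent: one needs a pseudo‑Anosov that at once (a) optimizes the ratio $\tau$ and (b) admits an honest geodesic axis, and one needs sharp enough control of the base intersection numbers $i(v_s,v_t)$ for $|s-t|\le\ell$ to obtain the precise shift by $2$ rather than a bare $O(\omega^{|i-j|})$. For (b) the surgery argument behind Theorem \ref{THM:main} --- Mangahas' Lemma \ref{geo} together with the bounded geodesic image Theorem \ref{bdd} --- is exactly the tool required. For the quantitative part one either tracks intersection numbers through an explicit train track carrying the invariant foliations of $\phi$, or builds the ray directly from a minimal‑intersection filling pair $\alpha,\beta$ and its images under a Penner‑type map $T_\alpha T_\beta^{-1}$, whose iterated intersection numbers $i(\alpha,(T_\alpha T_\beta^{-1})^{q}\beta)$ are computed exactly by the $2\times 2$ linear algebra of Penner's construction and whose distance estimates come from the efficiency inequality $d_{\mathcal C}(\cdot,\cdot)\gtrsim 2\log_{\omega} i(\cdot,\cdot)-O(1)$; in either route the efficiency of the chosen filling data is the crux.
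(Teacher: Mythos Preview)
This theorem is not proved in the paper at all: it is quoted verbatim from Aougab--Taylor \cite{aougab2014small} and used as a black box in the ratio-optimizer application (Theorem~4.4). So there is no ``paper's own proof'' to compare against; the relevant question is whether your argument stands on its own.

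It does not, because it is circular. The crucial step ``choosing $\alpha,\beta$ with the smallest possible intersection number among filling data realizing distance $l$, and letting $l$ grow, pushes $\log\lambda/\ell$ down to $\asymp\log\omega$'' is precisely the content of Theorem~\ref{THM:minint}: you need to know, in advance, that a filling pair at curve-graph distance $l$ can be found with $i(\alpha,\beta)=O(\omega^{l-2})$. Without that input you have no control on $i(\alpha,\beta)$, hence no control on the matrix entries in Thurston's representation, hence no bound on $\lambda^{1/\ell}$ in terms of $\omega$. In the paper the logic runs the other way: Theorem~\ref{THM:minint} is the input that makes the maps of Theorem~\ref{THM:main} into ratio optimizers, not a consequence of their being ratio optimizers.

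There is a second, independent gap. The big-$O$ in the statement is meant to be uniform in $(g,p)$; that uniformity is exactly what makes the theorem useful for the ratio problem. Your constant $C=C(\phi)$ coming from ``$i(\mu,\phi^q\nu)\le C\lambda^q$'' depends on the transverse measures of $\mu,\nu$ and on the finitely many base pairs $(v_s,v_t)$ in the fundamental segment, all of which vary with the surface. Likewise, ``for $|i-j|\le 2$, $i(v_i,v_j)$ is a fixed constant'' is false uniformly: distance-$2$ vertices on an arbitrary geodesic can intersect arbitrarily many times, and bounding that intersection by $O(1)$ (i.e.\ $O(\omega^0)$) is again part of what has to be proved. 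The actual Aougab--Taylor argument is a direct inductive construction of the $v_k$ with explicit control of $i(v_j,v_k)$ at each step, not a dynamical argument via a pre-existing pseudo-Anosov.
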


\begin{LEM}[\cite{aougab2015pseudo}]\label{lem:trace}
   Let $a=\begin{pmatrix}  1 & t\\ 
  0 & 1 \end{pmatrix}$, $b=\begin{pmatrix}  1 & 0\\ 
  t & 1 \end{pmatrix}$ and let $M=a^{a_1}b^{b_1}\cdots a^{a_n}b^{b_n}$ where $a_i, b_j \in \{1,-1\}$ and $t>1$. Then $\mathrm{trace} (M)\leq (2t)^{2n}$.
\end{LEM}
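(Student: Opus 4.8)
The plan is to prove the estimate by a completely naive entrywise expansion of the product, exploiting no cancellation: the bound $(2t)^{2n}$ is generous enough that one only needs to count terms and bound each matrix entry. First I would note that since $a_i,b_j\in\{1,-1\}$, each of the $2n$ factors of $M$ is one of the four matrices $a,a^{-1},b,b^{-1}$, and every entry of each of these lies in $\{0,\pm1,\pm t\}$; because $t>1$, every such entry has absolute value at most $t$.

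Writing $M=g_1g_2\cdots g_{2n}$ with each $g_k\in\{a^{\pm1},b^{\pm1}\}$, I would then expand an arbitrary entry of $M$ as a sum over index paths,
\[
M_{jk}=\sum_{i_1,\dots,i_{2n-1}\in\{1,2\}}(g_1)_{j\,i_1}(g_2)_{i_1 i_2}\cdots(g_{2n})_{i_{2n-1}\,k},
\]
a sum of exactly $2^{2n-1}$ terms, each one a product of $2n$ matrix entries and hence of absolute value at most $t^{2n}$. Therefore $\abs{M_{jk}}\le 2^{2n-1}t^{2n}$ for all $j,k$. (Equivalently one proves $\abs{(g_1\cdots g_m)_{jk}}\le 2^{m-1}t^m$ by induction on $m$, the inductive step being that right multiplication by $g_m$ replaces each entry by a sum of two products of an old entry with an entry of $g_m$.) Summing the two diagonal entries gives
\[
\mathrm{trace}(M)\le\abs{M_{11}}+\abs{M_{22}}\le 2\cdot 2^{2n-1}t^{2n}=(2t)^{2n},
\]
which is the claim (in fact with $\abs{\mathrm{trace}(M)}$ in place of $\mathrm{trace}(M)$).

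There is no genuine obstacle here; the only point needing a little care is pinning down the constant as exactly $(2t)^{2n}$. Estimating $\abs{M_{jk}}$ instead via submultiplicativity of the maximum-row-sum norm gives $\|M\|_{\infty}\le(1+t)^{2n}$ and hence only $\abs{\mathrm{trace}(M)}\le 2(1+t)^{2n}$, which need not be at most $(2t)^{2n}$ when $n$ is small and $t$ is close to $1$; counting the terms of the expansion directly avoids this loss. This matters for the intended use: in the ratio-optimizer application $M=\rho(f)$ for Thurston's representation of Theorem~\ref{thurston}, so the stretch factor of $f$ is the larger eigenvalue $\lambda$ of $M$, which satisfies $\lambda\le\abs{\mathrm{trace}(M)}$, and the bound $\mathrm{trace}(M)\le(2t)^{2n}$ is exactly what is needed to control $\log\lambda=\log l_{\mathcal T}(f)$ against the curve-graph translation length computed from Theorem~\ref{THM:main}.
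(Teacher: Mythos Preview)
Your entrywise expansion is correct and yields exactly $(2t)^{2n}$: each of the $2n$ factors has entries of absolute value at most $t$, the expansion of $M_{jk}$ has $2^{2n-1}$ terms each bounded by $t^{2n}$, and summing the two diagonal entries gives $\abs{\mathrm{trace}(M)}\le (2t)^{2n}$. There is nothing to compare against in this paper, since Lemma~\ref{lem:trace} is quoted from \cite{aougab2015pseudo} without proof here; your argument is a perfectly good self-contained verification. (One small slip in your closing remark: it should read $l_{\mathcal T}(f)=\log\lambda$, not $\log\lambda=\log l_{\mathcal T}(f)$.)
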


\begin{THM}
	Let $\gamma={v_0,v_1,v_2\cdots}$ be a geodesic ray in Theorem \ref{THM:minint}, and let $T_i=T_{v_i}$ as Dehn twist around $v_i$. 
    Then for any $i,j$ such that $|i-j|\geq 3$, let $a=T_{i}^{2M+1}$, $b=T_{j}^{2M+1}$ and $f=a^{a_1}b^{b_1}\cdots a^{a_n}b^{b_n}$ where $a_r, b_s \in \{1,-1\}$. Then we have the following:  
		$$\tau(f)=\frac{l_\mathcal{T}(f)}{l_\mathcal{C}(f)} =O(\log(\omega)) .$$
\end{THM}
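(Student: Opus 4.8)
The plan is to bound the numerator $l_\mathcal{T}(f)$ from above and the denominator $l_\mathcal{C}(f)$ from below, each by a quantity comparable to $\log(\omega)$ up to multiplicative constants independent of $\omega$, so that their ratio is $O(\log(\omega))$.

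First I would handle the denominator. The map $f = a^{a_1}b^{b_1}\cdots a^{a_n}b^{b_n}$ with $a = T_i^{2M+1}$, $b = T_j^{2M+1}$ is precisely a product of powers of Dehn twists about the two curves $v_i, v_j$, where each exponent has absolute value $2M+1 > 2M$, and since $|i-j| \geq 3$ along a geodesic ray we have $d_\mathcal{C}(v_i,v_j) = |i-j| \geq 3$, so $v_i$ and $v_j$ fill $S$. Hence Theorem \ref{THM:main} applies (with $l = |i-j|$, and $2n$ syllables), giving that $f$ is pseudo-Anosov with $l_\mathcal{C}(f) = 2n(|i-j|-2)$. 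In particular $l_\mathcal{C}(f) \geq 2n$, a bound independent of $\omega$ — this is the key point, that the curve-graph translation length grows linearly in $n$ and does \emph{not} shrink as $\omega$ grows.

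Next I would bound the numerator. The Teichm\"uller translation length of a pseudo-Anosov map equals the logarithm of its dilatation (stretch factor). I would use Thurston's construction, Theorem \ref{thurston}: with $A = \{v_i\}$, $B = \{v_j\}$ single curves, the relevant matrix $N$ is the $1\times 1$ matrix $[i(v_i,v_j)]$, so $\mu = i(v_i,v_j)^2$ and $\sqrt{\mu} = i(v_i,v_j) =: t$. By Theorem \ref{THM:minint}, $t = i(v_i,v_j) = O(\omega^{|i-j|-2})$. The representation $\rho$ sends $T_{v_i} \mapsto \left(\begin{smallmatrix}1 & t\\ 0 & 1\end{smallmatrix}\right)$ and $T_{v_j} \mapsto \left(\begin{smallmatrix}1 & 0\\ -t & 1\end{smallmatrix}\right)$ (up to sign conventions), so $\rho(f)$ is a word of the form $a^{a_1}b^{b_1}\cdots a^{a_n}b^{b_n}$ in these unipotent matrices raised to the powers $\pm(2M+1)$ — equivalently a word of length $2n(2M+1)$ in the generators of Lemma \ref{lem:trace}. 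Applying Lemma \ref{lem:trace} (with its $n$ replaced by $n(2M+1)$, since each of our $2n$ syllables contributes $2M+1$ elementary factors) gives $\mathrm{trace}(\rho(f)) \leq (2t)^{2n(2M+1)}$. Since the dilatation of $f$ is the largest eigenvalue $\lambda$ of $\rho(f)$ and $\lambda \leq \mathrm{trace}(\rho(f))$ for a hyperbolic element of $\PSL(2,\R)$ with positive trace, we obtain $l_\mathcal{T}(f) = \log\lambda \leq 2n(2M+1)\log(2t) = 2n(2M+1)\log\big(O(\omega^{|i-j|-2})\big) = O(n\log\omega)$, where the implied constant absorbs $M$ and $|i-j|$.

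Combining, $\tau(f) = l_\mathcal{T}(f)/l_\mathcal{C}(f) \leq O(n\log\omega)/(2n) = O(\log\omega)$, as desired. The main obstacle I anticipate is purely bookkeeping: reconciling the sign conventions and exponent normalizations so that $\rho(f)$ really is the matrix word to which Lemma \ref{lem:trace} applies (in particular that $f$ being pseudo-Anosov forces $\rho(f)$ hyperbolic, which is needed to identify $l_\mathcal{T}(f)$ with $\log$ of the top eigenvalue), and checking that the intersection-number growth bound of Theorem \ref{THM:minint} feeds in with the correct constant so that $\log t = O(\log\omega)$ uniformly in the choice of $i,j$. None of these is deep; the conceptual content is that Theorem \ref{THM:main} pins the denominator to $2n(|i-j|-2)$ while Thurston's construction plus the trace bound keep the numerator at $O(n\log\omega)$.
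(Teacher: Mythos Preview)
Your approach is the same as the paper's: bound $l_\mathcal{C}(f)$ below via Theorem~\ref{THM:main} and $l_\mathcal{T}(f)$ above via Thurston's representation and the trace estimate of Lemma~\ref{lem:trace}. The denominator computation is identical.

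There is one bookkeeping slip worth fixing. You set $t=i(v_i,v_j)$ and then try to apply Lemma~\ref{lem:trace} with ``$n$ replaced by $n(2M+1)$,'' arguing that $\rho(f)$ is a word of length $2n(2M+1)$ in the elementary unipotents. But Lemma~\ref{lem:trace} as stated requires the \emph{alternating} form $a^{\pm1}b^{\pm1}\cdots$, and the expanded word $a\cdots a\, b\cdots b\,\cdots$ is not of that form. The paper sidesteps this by absorbing the exponent into the parameter: it sets $t=(2M+1)\cdot i(v_i,v_j)$, so that $a=T_i^{2M+1}$ and $b=T_j^{2M+1}$ map \emph{directly} to $\left(\begin{smallmatrix}1&t\\0&1\end{smallmatrix}\right)$ and $\left(\begin{smallmatrix}1&0\\t&1\end{smallmatrix}\right)$. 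Then $f=a^{a_1}b^{b_1}\cdots a^{a_n}b^{b_n}$ with $a_r,b_s\in\{\pm1\}$ is \emph{exactly} the word in Lemma~\ref{lem:trace}, yielding $\mathrm{trace}(\rho(f))\le(2t)^{2n}$ and hence $l_\mathcal{T}(f)\le 2n\log\!\big(2(2M+1)\,i(v_i,v_j)\big)$. Since $M$ is a universal constant this is still $O(n\log\omega)$, and the rest of your argument goes through unchanged.
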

\begin{proof}
    Let's map $a\mapsto \begin{pmatrix}  1 & t\\ 
  0 & 1 \end{pmatrix}$ and  $b\mapsto \begin{pmatrix}  1 & 0\\ 
  t & 1 \end{pmatrix}$, where $t=i(v_i,v_j)\cdot(2M+1)$.
  Since our $f$ is Thurston's construction made using two filling single curves, we can see that the stretch factor of $f$ coincides with the largest eigenvalue of matrix form by Theorem \ref{thurston}. 
  $a, b$ are mapped to matrix which has determinant $1$, thus $f$'s matrix form also has determinant $1$. Two eigenvalues are the multiplicative inverse of each other, and the trace is a sum of two. 
  Thus largest eigenvalue is smaller than the trace, and by Lemma \ref{lem:trace}, smaller than $(2t)^{2n}$. $l_\mathcal{T}(f)$ is logarithm of stretch factor, thus $l_\mathcal{T}(f)<2n \log (2t)$. We know that $l_\mathcal{C}(f)=2n(|j-i|-2)$ by Theorem \ref{THM:main} where we exactly calculated translation length. Combining these with \ref{THM:minint} gives us the desired result.
\end{proof}
\subsection{Right-Angled Artin Group}
Koberda proved that for a finite set of curves $C$, there exists $K$ such that for $n>K$, $\{ T_\gamma ^n \in C \}$ generates right-angled Artin groups \cite{koberda2012right}.
Koberda suggested a problem, how to decide the value of $K$.

While it is proven that the value of $K$ depends on the choice of $C$, in the general case of curve sets answers are given in \cite{seo2021powers}, \cite{runnels2021effective}.
On curves that have certain relations on the curve graph, we can answer Koberda's question using our theorems. 
Results in \cite{seo2021powers}, \cite{runnels2021effective} are bounded by using terms of intersection number of pair of curves.
Here we give answers in terms of the distance of two curves on subsurface projection on other curves, which is less or equal to the intersection number of curves.

\begin{THM}
    Consider curves $\alpha_1,\alpha_2,\cdots,\alpha_t$ which satisfy $d_{\mathcal{C}}(\alpha_i,\alpha_j)\geq3$ for any distinct $i,j$. 
	Let $n> 2M+2+\max {d_{\alpha_k}(\alpha_i,\alpha_j)}$. Then the set $\{ T_{\alpha_1} ^n,\cdots ,T_{\alpha_t} ^n \} $ generates the free group of rank $k$ .
	
\end{THM}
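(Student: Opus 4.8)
The plan is to prove that the natural homomorphism $\phi\colon F_t\to\Mod(S)$ sending the $i$-th free generator $x_i$ to $T_{\alpha_i}^n$ is injective; this is precisely the assertion that $\{T_{\alpha_1}^n,\dots,T_{\alpha_t}^n\}$ freely generates a free group of rank $t$ (the ``$k$'' in the statement should read ``$t$''). Since $\phi$ intertwines conjugation and a group element is trivial exactly when one, equivalently every, conjugate of it is trivial, it suffices to show $\phi(w)\neq\mathrm{id}$ for every nontrivial \emph{cyclically reduced} word $w$. Write such a word as $w=x_{i_1}^{c_1}x_{i_2}^{c_2}\cdots x_{i_m}^{c_m}$ with every $c_j\neq 0$, with $i_j\neq i_{j+1}$ for $1\le j\le m-1$, and with $i_m\neq i_1$, so that $\phi(w)=T_{\alpha_{i_1}}^{c_1 n}T_{\alpha_{i_2}}^{c_2 n}\cdots T_{\alpha_{i_m}}^{c_m n}$.

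If $m=1$ then $\phi(w)=T_{\alpha_{i_1}}^{c_1 n}$ is a nonzero power of a Dehn twist, hence of infinite order, in particular nontrivial. If $m\ge 2$ I would invoke Theorem \ref{manycurves} for the cyclic sequence of curves $\alpha_{i_1},\alpha_{i_2},\dots,\alpha_{i_m}$ (indices read $\bmod\ m$) with exponents $n_j=c_j n$. The cyclic reducedness of $w$ guarantees that consecutive entries of this sequence are \emph{distinct} curves from our collection, so $d_{\mathcal C}(\alpha_{i_j},\alpha_{i_{j+1}})\ge 3$ for every $j$, which is the first hypothesis of Theorem \ref{manycurves}. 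For the exponent hypothesis one needs $|n_j|=|c_j|\,n>2M+2+d_{\alpha_{i_j}}(\alpha_{i_{j-1}},\alpha_{i_{j+1}})$ for each $j$; since $|c_j|\ge 1$ and $n>2M+2+\max_{i,j,k} d_{\alpha_k}(\alpha_i,\alpha_j)$ by hypothesis, this holds. (When $m=2$ the two neighbours of $\alpha_{i_j}$ coincide, the relevant subsurface distance is $0$, and the bound degrades to $n>2M+2$, which is still satisfied; this is the two-filling-curve corner case.) Theorem \ref{manycurves} then shows $\phi(w)$ is pseudo-Anosov, hence $\phi(w)\neq\mathrm{id}$. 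Therefore $\ker\phi$ is trivial and the claim follows.

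The argument is essentially bookkeeping layered on top of Theorem \ref{manycurves}; the only points requiring attention are the reduction to cyclically reduced words — so that the curve sequence attached to the word really is cyclically admissible, rather than having a curve adjacent to itself — and the observation that a curve of the collection may legitimately recur non-consecutively along that sequence. Both are fine because Theorem \ref{manycurves} asks only for consecutive curve-graph distance at least $3$ and permits arbitrary repetition otherwise, and because the uniform exponent bound $n>2M+2+\max_{i,j,k} d_{\alpha_k}(\alpha_i,\alpha_j)$ is chosen precisely to dominate every subsurface-distance term that can appear. I do not anticipate a genuine obstacle beyond carefully phrasing this cyclic-reduction step.
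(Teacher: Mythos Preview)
Your proposal is correct and follows essentially the same route as the paper: reduce to a cyclically reduced word, then apply Theorem~\ref{manycurves} to conclude the image is pseudo-Anosov, hence nontrivial. Your write-up is in fact more careful than the paper's own proof, explicitly handling the $m=1$ case and verifying the exponent hypothesis (including the degenerate $m=2$ situation), whereas the paper simply asserts that ``our $n$ is chosen to satisfy the condition'' and leaves the single-generator case implicit.
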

\begin{proof}
    Consider element in group generated from $\{ T_{\alpha_1} ^n,\cdots T_{\alpha_t} ^n \} $. 
    If it contains Dehn twists of more than two curves, consider its cyclically reduced form.
    Our $n$ is chosen to satisfy the condition in our theorem \ref{manycurves}. 
    A cyclically reduced element is a pseudo-Anosov map, thus original element is also a pseudo-Anosov map.
    This means any word written by more than one generator can not be trivial, so there can't exist any relation in the group.
    
\end{proof}

Using the same logic, in the multicurves cases, we can find the constant needed to guarantee that any word that contains Dehn twists from at least two multicurves will become a pseudo-Anosov map.
Thus that element can not be trivial, so there can't exist a relation in the group containing more than one multicurve.
Each set of Dehn twists on multicurve generates a free Abelian group, thus we get a right-angled Artin group which is a free product of free Abelian groups.
\begin{THM}
	Consider filling multicurves $A, B$ with $\dist(A,B)\geq 3$. For $n > 2M+3$, $\{ T_\gamma ^n |\gamma \in A\cup B \}$ generates $\Z^{|A|}*\Z^{|B|} $.
\end{THM}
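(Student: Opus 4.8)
The plan is to reduce this statement to Theorem~\ref{THM:multicurve} in the same way that the preceding right-angled Artin group theorem for single curves reduced to Theorem~\ref{manycurves}. Fix $n > 2M+3$ and consider the subgroup $G = \grp{T_\gamma^n : \gamma \in A \cup B}$. Since the Dehn twists about the curves of a single multicurve $A$ pairwise commute (their cores are disjoint), the subgroup generated by $\{T_\gamma^n : \gamma \in A\}$ is a quotient of $\Z^{|A|}$, and likewise for $B$; moreover these twists are independent because distinct disjoint curves give independent twists, so in fact each such subgroup is isomorphic to $\Z^{|A|}$ (resp.\ $\Z^{|B|}$). Thus there is a natural surjection $\Z^{|A|} * \Z^{|B|} \twoheadrightarrow G$, and it suffices to show this map is injective, i.e.\ that no nontrivial element of the free product lies in the kernel.

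Next I would take an arbitrary nontrivial element $w$ of $\Z^{|A|} * \Z^{|B|}$ and write it in normal form for the free product: $w = w_1 w_2 \cdots w_{2k}$ (after cyclic reduction, possibly conjugating) where the $w_{2i-1}$ are nontrivial elements of the $A$-factor and the $w_{2i}$ are nontrivial elements of the $B$-factor. Because $n > 2M+3$, each $w_i$, when written as a product of $n$-th powers of Dehn twists about curves in a single multicurve, contains at least one factor $T_\gamma^t$ with $|t| \geq n > 2M+3$ — indeed $t$ is a nonzero multiple of $n$. Hence the image of $w$ in $G$, namely $f = f_1 \cdots f_{2k}$ with $f_{2i-1}$ a product of twists in $A$ and $f_{2i}$ a product of twists in $B$, satisfies exactly the hypotheses of Theorem~\ref{THM:multicurve} (using $\dist(A,B) = l \geq 3$). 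Therefore $f$ is pseudo-Anosov, hence $f$ has infinite order and is in particular nontrivial in $\Mod(S)$. If $w$ was not already cyclically reduced, we conjugated it; a conjugate of $f$ is still pseudo-Anosov and nontrivial, so $w$ itself maps to a nontrivial element. Since the only elements of $\Z^{|A|} * \Z^{|B|}$ whose normal form does not involve both factors are already in one of the two factors — where injectivity is clear from the disjointness of the curves — every nontrivial $w$ maps to a nontrivial element of $G$, and the surjection is an isomorphism.

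The one point requiring a little care, and the main obstacle, is the reduction to a cyclically reduced word that genuinely alternates between the two multicurve factors in the sense demanded by Theorem~\ref{THM:multicurve}: collapsing consecutive syllables from the same factor (which may cancel or combine) must still leave every surviving syllable with a twist exponent that is a nonzero multiple of $n$, and one must confirm that after cyclic reduction a nontrivial $w$ not lying in a single factor still has $k \geq 1$ genuine alternations so that the theorem applies. This is a routine bookkeeping argument about normal forms in free products, combined with the observation that summing integer multiples of $n$ within a single $\Z$-factor either vanishes or remains a nonzero multiple of $n$; once this is in place the pseudo-Anosov conclusion from Theorem~\ref{THM:multicurve} does all the work.
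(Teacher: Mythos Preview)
Your proposal is correct and follows essentially the same route as the paper. The paper's argument is given in the paragraph immediately preceding the theorem: any word involving Dehn twists from both multicurves, after cyclic reduction, satisfies the hypotheses of Theorem~\ref{THM:multicurve} and is therefore pseudo-Anosov (hence nontrivial), while words supported on a single multicurve already generate a free abelian group; this is exactly your reduction, and your discussion of normal forms and the exponent bookkeeping simply makes explicit what the paper leaves implicit.
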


\begin{THM}
	Let multicurves $C_1\cdots C_n$ satisfy $\dist(C_i,C_j )\geq 3 $. 
	For $n > 2M+3+\max \{d_\gamma(C_i,C_j)|\gamma \in \bigcup C_k\}$, 
	$\{ T_\gamma ^n |\gamma \in \bigcup C_k\} $ generates  $ \Z^{|C_1|}*\cdots*\Z^{|C_n|} $.
\end{THM}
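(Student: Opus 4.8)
The plan is to follow the template of the two preceding theorems, with Theorem \ref{THM:multicurves} taking the place for multicurves that Theorem \ref{manycurves} held for single curves. First I would record the standard fact that for each fixed $k$ the Dehn twists about the components of $C_k$ pairwise commute and generate a free abelian subgroup of $\Mod(S)$ of rank $|C_k|$, and that this persists after replacing every generator by its $n$-th power. Hence there is a canonical surjection
\[
\Phi\colon \Z^{|C_1|}*\cdots*\Z^{|C_n|}\longrightarrow \bigl\langle\, T_\gamma^n : \gamma\in\bigcup C_k\,\bigr\rangle\le\Mod(S),
\]
carrying the $k$-th free factor isomorphically onto $\langle T_\gamma^n:\gamma\in C_k\rangle$; the content of the theorem is exactly the injectivity of $\Phi$.

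To establish injectivity I would take a nontrivial $w$ in the free product and show $\Phi(w)\ne 1$. Since triviality in $\Mod(S)$ is a conjugacy invariant, I may replace $w$ by a cyclically reduced representative of its conjugacy class and write $w=g_1g_2\cdots g_m$, each $g_i$ a nontrivial element of the factor with index $j_i$, where $j_i\ne j_{i+1}$ cyclically and $m\ge 1$. If $m=1$ then $\Phi(w)$ is a nontrivial element of the free abelian group $\langle T_\gamma^n:\gamma\in C_{j_1}\rangle$, so $\Phi(w)\ne 1$. If $m\ge 2$, put $D_i:=C_{j_i}$; then $D_i\ne D_{i+1}$ cyclically, so $\dist(D_i,D_{i+1})\ge 3$ by hypothesis, and each $\Phi(g_i)$ is a multitwist $\prod_{\gamma\in D_i}T_\gamma^{t_\gamma}$ with the $t_\gamma$ multiples of $n$, not all zero; choosing $\gamma_i\in D_i$ with $t_{\gamma_i}\ne 0$ gives $|t_{\gamma_i}|\ge n>2M+3+d_{\gamma_i}(D_{i-1},D_{i+1})$, the last inequality because $n$ exceeds $2M+3$ plus the stated maximum of annular distances. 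These are precisely the hypotheses of Theorem \ref{THM:multicurves} for the cyclic word $\Phi(g_1)\cdots\Phi(g_m)$, so $\Phi(w)$ is pseudo-Anosov, and in particular $\Phi(w)\ne 1$. Thus $\Phi$ is injective, hence an isomorphism onto its image, which displays the group as the asserted free product of free abelian groups (a right-angled Artin group) and shows that any word using more than one factor is pseudo-Anosov.

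The only real input is Theorem \ref{THM:multicurves}; everything else is bookkeeping, and the place that deserves attention is checking that the hypotheses of that theorem genuinely survive the passage to a cyclically reduced word---for instance, that reordering the commuting twists inside each $g_i$ causes no trouble when naming $\gamma_i$, that short cyclic words (the case $m=1$, treated separately above, and $m=2$, where the relevant annular distance is simply $0$) are covered, that each $\gamma_i$ meets every component of $D_{i-1}$ and $D_{i+1}$ so those projections are nonempty, and that the hypothesis $\dist(C_i,C_j)\ge 3$ has been required for \emph{all} distinct $i,j$, not merely along a cyclic chain as in the earlier theorems, precisely because an arbitrary reduced word can place any two factors next to one another. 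I do not expect a genuine obstacle; the statement is essentially a corollary of Theorem \ref{THM:multicurves} packaged through the normal form for free products.
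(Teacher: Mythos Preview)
Your proposal is correct and follows essentially the same approach as the paper: the paper's own proof is the one-sentence remark ``using the same logic'' preceding the statement, i.e., cyclically reduce an arbitrary word, invoke Theorem~\ref{THM:multicurves} to see that any word mixing two or more multicurves is pseudo-Anosov, and conclude that the only relations are the commuting ones inside each $C_k$. Your write-up is in fact more careful than the paper's sketch (you explicitly set up the surjection $\Phi$, separate the $m=1$ case, and flag the edge cases $m=2$ and nonemptiness of projections), but the strategy is identical.
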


\bibliographystyle{spmpsci}
\bibliography{refs}

\end{document}